\newtheorem{remark}{Remark}
\newcommand{\vx}{\mathbf{x}}
\newcommand{\vy}{\mathbf{y}}
\newcommand{\objfct}{\mathcal{L}}
\newcommand{\Xspace}{\mathcal{X}}
\newcommand{\Yspace}{\mathcal{Y}}
\newcommand{\horse}{{\textit{\sc Reckless}}\xspace}
\newcommand{\ES}{\textsf{ES}\xspace}
\newcommand{\EVOS}{Evolution Strategies\xspace}
\newcommand{\cmaes}{\texttt{CMA-ES}\xspace}
\newcommand{\nes}{\texttt{NES}\xspace}
\newcommand{\mmde}{\texttt{MMDE}\xspace}
\newcommand{\coevp}{\texttt{CoevP}\xspace}
\newcommand{\coeva}{\texttt{CoevA}\xspace}
\newcommand{\mse}{\texttt{MSE}}
\newcommand{\vmu}{\bm{\mu}}
\newcommand{\vepsilon}{\bm{\epsilon}}
\newcommand{\fes}{\text{\#FEs}\xspace}
\newcommand{\mx}{\mathbf{X}}
\newcommand{\my}{\mathbf{Y}}
\renewcommand{\eqref}[1]{Eq.~\ref{#1}}
\begin{document}
\title{On the Application of Danskin's Theorem to Derivative-Free Minimax Optimization}
%\titlenote{Produces the permission block, and
%  copyright information}
%\subtitle{Extended Abstract}
%\subtitlenote{The full version of the author's guide is available as
 % \texttt{acmart.pdf} document}

\author{Abdullah Al-Dujaili}
\affiliation{%
  \institution{CSAIL, MIT}
%  \streetaddress{P.O. Box 1212}
  \city{Cambridge} 
  \state{USA} 
%  \postcode{43017-6221}
}
\email{aldujail@mit.edu}

\author{Shashank Srikant}
\affiliation{%
	\institution{CSAIL, MIT}
	%  \streetaddress{P.O. Box 1212}
	\city{Cambridge} 
	\state{USA} 
	%  \postcode{43017-6221}
}
\email{shash@mit.edu}

\author{Erik Hemberg}
\affiliation{%
	\institution{CSAIL, MIT}
	%  \streetaddress{P.O. Box 1212}
	\city{Cambridge} 
	\state{USA} 
	%  \postcode{43017-6221}
}
\email{hembergerik@csail.mit.edu}

\author{Una-May O'Reilly}
\affiliation{%
	\institution{CSAIL, MIT}
	%  \streetaddress{P.O. Box 1212}
	\city{Cambridge} 
	\state{USA} 
	%  \postcode{43017-6221}
}
\email{unamay@csail.mit.edu}

% The default list of authors is too long for headers.
%\renewcommand{\shortauthors}{B. Trovato et al.}

\begin{abstract}
	Motivated by Danskin's theorem, gradient-based methods have been applied with empirical success to solve minimax problems that involve non-convex outer minimization and non-concave inner maximization. On the other hand, recent work has demonstrated that Evolution Strategies (ES) algorithms are stochastic gradient approximators that seek robust solutions. 
	In this paper, we address black-box (gradient-free) minimax problems that have long been tackled in a coevolutionary setup. To this end and guaranteed by Danskin's theorem, we employ ES as a stochastic estimator for the descent direction.
	  The proposed approach is validated on a collection of black-box minimax problems. Based on our experiments, our method's performance is comparable with its coevolutionary counterparts and favorable for high-dimensional problems.  Its efficacy is demonstrated on a real-world application.
\end{abstract}

\maketitle

\section{Introduction}
\label{sec:intro}

Many real-world applications involve an adversary and/or uncertainty, specifically in the security domain. Consequently, several methods have been proposed to find solutions that have the best worst-case (or average) performance for security-critical systems. Important examples include face recognition~\cite{sharif2017adversarial} and malware detection~\cite{huang2018adversarial}.

The notion of security and adversarial robustness can be described by a minimax formulation~\cite{huang2015learning, madry2017towards}. The formulation is motivated by theoretical guarantees from Danskin's theorem~\cite{danskin1966theory} on using first-order information, i.e. gradients, to find or approximate solutions. Further, where theoretical guarantees can not be assumed, empirical solutions to problems, e.g. digit recognition, have been demonstrated~\cite{madry2017towards}.

In this paper, our interest is in black-box (gradient-free) minimax problems where, in contrast to the aforementioned examples of image recognition and malware, gradients are neither symbolically nor numerically available, or they are complex to compute~\cite{conn2009introduction}. This has led to extensive use of coevolutionary frameworks~\cite{herrmann1999genetic,jensen2001robust} to solve such problems. These frameworks however do not reconcile the guarantees provided by gradient-based frameworks in solving the minimax problem. Our goal is to bridge this divide and develop a method for black-box minimax that is consistent with the theoretical assumptions and guarantees of Danskin's theorem while using a gradient estimator in lieu of a gradient. For gradient estimation, we propose to employ a class of black-box optimization algorithms, viz. \EVOS (\ES).

%Despite being at the heart of leading optimization techniques for various applications,
 
Our proposition is motivated by the growing body of work~\cite{salimans2017evolution,morse2016simple} which has shown that the performance of gradient-based methods can be rivaled by \ES, and that \ES is more than just a traditional finite difference approximator~\cite{lehman2017more}. For more empirical and theoretical insights on \ES vs. gradient-based methods, see~\cite{JMLR:v18:14-467,wierstra2014natural,akimoto2012analysis}.

We report the following contributions:
\begin{inparaenum}[\itshape 1)]
\item We present a formal theoretical motivation for using \ES to meet the theoretical guarantees of using gradients for gradient-free minimax problems.
\item We present the \horse framework, which employs \ES to solve black-box minimax problems that have long been addressed in a coevolutionary setup.\footnote{\horse looks for minimax solutions and saddle points (if they exist), hence its name: \url{https://en.wikipedia.org/wiki/Sergeant_Reckless}}
\item We compare variants of \ES that have different properties in terms of their gradient estimation on a collection of benchmark problems.  Coevolutionary approaches have often been evaluated based on the Euclidean error of their solutions with respect to the optimal solutions. Our comparison is based on the notion of \emph{regret} (loss) in the objective function value at a solution with respect to that at an optimal solution.
\item We validate the effectiveness of the proposed framework and compare its performance in terms of regret against the state of the art coevolutionary algorithms on benchmark problems as well as a real-world application. We find that \horse scales better than the majority of its coevolutionary counterparts.
\item Finally, we provide the \horse framework and experiment code for public use.\footnote{The link to the code repository will be made available at \url{https://github.com/ALFA-group}.}
\end{inparaenum}

\section{Background}
\label{sec:background}
This section introduces the notation and terminology used in the rest of the paper, followed by a summary of coevolutionary approaches for black-box minimax problems.

\subsection{Black-Box Minimax Problem}
\label{sec:minmax-problem}
Formally, we are concerned with the black-box minimax optimization problem given a finite budget of function evaluations. Mathematically, the problem is a composition of an inner maximization problem and an outer minimization problem,
\begin{equation}
\min_{\vx \in \Xspace}\max_{\vy \in \Yspace} \objfct(\vx, \vy)\;,
\label{eq:minmax}
\end{equation}
where $\Xspace\subseteq \mathbb{R}^{n_x}$, $\Yspace\subseteq \mathbb{R}^{n_y}$, and $\objfct:\Xspace \times \Yspace \to \mathbb{R}$. The problem is called black-box because there is no closed-form expression of $\objfct$. Instead, one can query an oracle (e.g., a simulation) for the value of $\objfct$ at a specific pair $(\vx,\vy) \in \Xspace\times \Yspace$. The task is then to find the optimal solution $\vx^* \in \Xspace$ to~\eqref{eq:minmax}, whose corresponding objective function value $\objfct(\vx^*,\cdot)$ is at its max at $\vy^*\in\Yspace$, or a good approximate using a finite number of function evaluations, which are expensive in terms of computational resources (e.g. CPU time). 

The pair $(\hat{\vx}, \hat{\vy})\in \Xspace \times \Yspace$ is called a saddle point of~\eqref{eq:minmax} if $\forall \vx \in \Xspace$, $\forall \vy \in \Yspace$,
\begin{equation}
\objfct(\hat{\vx}, \vy) \leq \objfct(\hat{\vx}, \hat{\vy}) \leq \objfct(\vx, \hat{\vy})\;.
\label{eq:saddle-pt-def}
\end{equation}
Equivalently~\cite{jensen2001robust}, such pair exists if:
\begin{equation}
\min_{\vx \in \Xspace}\max_{\vy \in \Yspace} \objfct(\vx, \vy) = \max_{\vy \in \Yspace}\min_{\vx \in \Xspace} \objfct(\vx, \vy)\;.
\label{def:saddle-pt-symmetry}
\end{equation}
If a saddle point exists, then it follows from~\eqref{eq:saddle-pt-def}, that 
\begin{equation}
\min_{\vx \in \Xspace}\max_{\vy \in \Yspace} \objfct(\vx, \vy) =
max_{\vy \in \Xspace}\objfct(\hat{\vx}, \vy) =
min_{\vx \in \Xspace}\objfct(\vx, \hat{\vy})\;.
\end{equation}
From a game theory perspective, a saddle point represents a two-player zero-sum game equilibrium. Minimax problems with saddle points are referred to as symmetrical problems~\cite{jensen2003new}, in contrast to asymmetrical problems for which the condition~\eqref{eq:saddle-pt-def} does not hold. 

Given an algorithm's best solution $\vx_*$ to \eqref{eq:minmax}, the mean square Euclidean error (\mse) has been used as a performance metric to compare $\vx_*$ to the optimal solution $\vx^*$~\cite{qiu2017new,cramer2009evolutionary}. That is, 
\begin{equation}
\text{MSE}(\vx_*) = \frac{1}{n_x}||\vx_* - \vx^* ||^2_2\;.
\label{eq:mse}
\end{equation}
In this paper, we introduce a metric that is closely related to the notion of loss in decision-making problems under adversary and/or uncertainty as well as continuous optimization~\cite{bubeck2012regret,valko2013stochastic}: the \emph{regret} of an algorithm's best solution $\vx_*$ in comparison to the optimal solution $\vx^*$ is defined as
\begin{equation}
r(\vx_*) = \max_{\vy \in \Yspace} \objfct(\vx_*, \vy) - \objfct(\vx^*, \vy^*)\;,
\label{eq:regret}
\end{equation}
where the first term can be computed using an ensemble of off-the-shelf black-box continuous optimization solvers. Note that the regret measure, in comparison to \mse, allows us to compare the quality of several proposed solutions without the knowledge about the pair $(\vx^*,\vy^*)$ by just computing the first term of~\eqref{eq:regret}.

 \begin{algorithm}[h]
	\small
	\caption{Coevolutionary Algorithm Alternating~(\coeva) \newline
		\textbf{Input:}\newline 
		$~~$\hspace{\algorithmicindent}$T$: number of iterations,~$~~$\hspace{\algorithmicindent}$\tau$ : tournament size, \newline
		$~~$\hspace{\algorithmicindent}$\mu$: mutation probability,~$~~$\hspace{\algorithmicindent}$\lambda$ : population size
	}
	\label{alg:coevolutionary_algorithm_alternating}
	\begin{algorithmic}[1]
		\State $\vx_{1,0}, \ldots, \vx_{\lambda,0} \sim \mathcal{U}(\Xspace)$ 
		\State  $\vy_{1,0}, \ldots, \vy_{\lambda,0} \sim \mathcal{U}(\Yspace)$ 
		\State $\mx_0 \gets [\vx_{1,0}, \ldots, \vx_{\lambda,0}]$\Comment{Initialize minimizer population}
		\State $\my_0 \gets [\vy_{1,0}, \ldots, \vy_{\lambda,0}]$\Comment{Initialize maximizer population}
        \State $t\gets 0$
        \Repeat
        \State \texttt{sort}($\mx_{t},\objfct$)
        \State \texttt{sort}($\my_{t},\objfct$)
		\State $t \gets t + 1$ \Comment{Increase counter}
		\State $\mx_t \gets $ select from $s(\mx_{t-1}, \tau)$ \Comment{Tournament selection}
		\State $\mx_t \gets $ perturb values in $m(\mx_t, \mu)$ \Comment{Gaussian mutation}
		\State $\vx'_*, \vy'_* \gets \arg\min_{\vx \in \mx_t} \arg\max_{\vy \in \my_{t-1}} \objfct(\vx, \vy)$ \Comment{Best minimizer}
		\If {$\objfct(\vx'_*, \vy'_*) < \objfct(\vx_{\lambda, t-1}, \vy_{\lambda, t-1})$} \Comment{Replace worst minimizer}
		\State $\vx_{\lambda,t-1}\gets \vx'_*$ \Comment{Update population}
		\EndIf
		\State $\mx_t \gets \mx_{t-1}$ \Comment{Replicate population}
		\State $t \gets t + 1$ \Comment{Increase counter before alternating}
		\State $\my_t \gets $ select from $s(\my_{t-1}, \tau)$ \Comment{Tournament selection}
		\State $\my_t \gets $ perturb values with $m(\my_t, \mu)$ \Comment{Gaussian mutation}
		\State $\vx'_{0}, \vy'_{0} \gets \arg\min_{\vx \in \mx_{t}} \arg\max_{\vy \in \my_t} \objfct(\vx, \vy)$ \Comment{Best maximizer}
		\If {$\objfct(\vx'_{0}, \vy'_{0}) > \objfct(\vx_{\lambda,t}, \vy_{\lambda, t-1})$} \Comment{Replace worst maximizer}
		\State $\vy_{\lambda,t-1}\gets \vy'_{0}$  \Comment{Update population}
		\EndIf
		\State $\my_t \gets \my_{t-1}$ \Comment{Replicate population}
		\Until $t \geq T$
		\State $\vx_*, \vy_* \gets \arg\min_{\vx \in \mx_T} \arg\max_{\vy \in \my_T} \objfct(\vx, \vy)$ \Comment{Best minimizer}
		\State \Return $\vx_*, \vy_*$
	\end{algorithmic}
\end{algorithm}

\subsection{Coevolutionary Approaches}
\label{sec:coevo}

Coevolutionary algorithms for minmax problems maintain two populations, the first respresents \textit{solutions} and the second contains \textit{tests}\cite{herrmann1999genetic}. The fitness of \textit{solution} is determined by its performance when it interacts with some set of \textit{tests}. In this work, we explore two variants of this approach - \textit{alternating} and \textit{parallel}. In \textit{alternating}, the two populations (one for finding the minimum, and one for the maximum) take turns in evolving, learning from the previous iteration's population. In \textit{parallel}, the updates to both the populations happen in each step. Ideally there is convergence to a robust solution and its worst-case tests. The algorithms for these two approaches are listed in Algorithm~~\ref{alg:coevolutionary_algorithm_alternating} and \ref{alg:coevolutionary_algorithm_parallel}, see~\cite{back2000evolutionary} for selection and mutation operator details.

 \begin{algorithm}
    \small
    \caption{Coevolutionary Algorithm Parallel~(\coevp)  \newline
        \textbf{Input:}\newline 
        $~~$\hspace{\algorithmicindent}$T$: number of iterations,~$~~$\hspace{\algorithmicindent}$\tau$ : tournament size, \newline
        $~~$\hspace{\algorithmicindent}$\mu$: mutation probability,~$~~$\hspace{\algorithmicindent}$\lambda$ : population size
    }
    \label{alg:coevolutionary_algorithm_parallel}
    \begin{algorithmic}[1]
        \State $\vx_{1,0}, \ldots, \vx_{\lambda,0} \sim \mathcal{U}(\Xspace)$ 
        \State  $\vy_{1,0}, \ldots, \vy_{\lambda,0} \sim \mathcal{U}(\Yspace)$ 
        \State $\mx_0 \gets [\vx_{1,0}, \ldots, \vx_{\lambda,0}]$\Comment{Initialize minimizer population}
        \State $\my_0 \gets [\vy_{1,0}, \ldots, \vy_{\lambda,0}]$\Comment{Initialize maximizer population}
        \State $t\gets 0$
        \Repeat
        \State \texttt{sort}($\mx_{t},\objfct$)
        \State \texttt{sort}($\my_{t},\objfct$)
        \State $t \gets t + 1$ \Comment{Increase counter}
        \State $\mx \gets $ select from $s(\mx_{t-1}, \tau)$ \Comment{Tournament selection}
        \State $\my \gets $ select from $s(\my_{t-1}, \tau)$ \Comment{Tournament selection}
        \State $\mx \gets $ perturb values with $m(\mx,\mu)$ \Comment{Gaussian mutation}
        \State $\my \gets $ perturb values with $m(\my, \mu)$ \Comment{Gaussian mutation}
        \State $\vx'_*, \vy'_* \gets \arg\min_{\vx \in \mx} \arg\max_{\vy \in \my} \objfct(\vx, \vy)$ \Comment{Best minimizer}
        \If {$\objfct(\vx'_*, \vy'_*) < \objfct(\vx_{\lambda, t-1}, \vy_{\lambda, t - 1})$} \Comment{Replace worst minimizer}
        \State $\vx_{\lambda, t-1} \gets \vx'_*$ \Comment{Update population}
        \EndIf
        \State $\vx'_{0}, \vy'_{0} \gets \arg\min_{\vx \in \mx} \arg\max_{\vy \in \my} \objfct(\vx, \vy)$ \Comment{Best maximizer}
        \If {$\objfct(\vx'_{0}, \vy'_{0}) > \objfct(\vx_{\lambda, t-1}, \vy_{\lambda, t-1})$} \Comment{Replace worst maximizer}
        \State $\vy_{\lambda, t-1}\gets \vy'_{0}$ \Comment{Update population}
        \EndIf
                \State $\mx_t \gets \mx_{t-1}$ \Comment{Replace population}
                \State $\my_t \gets \my_{t-1}$ \Comment{Replace population}
        \Until $t \geq T$
        \State $\vx_*, \vy_* \gets \arg\min_{\vx \in \mx_T} \arg\max_{\vy \in \my_T} \objfct(\vx, \vy)$ \Comment{Best minimizer}
        \State \Return $\vx_*, \vy_*$
    \end{algorithmic}
 \end{algorithm}

 \paragraph{Minimax Differential Evolution (\mmde)} The \mmde algorithm introduced by~\citet{qiu2017new} attempts to overcome the limitations of existing approaches in solving minimax optimization problems using differential evolution. They introduce a \emph{bottom-boosting scheme} which skips redundant objective function computations while maintaining the reliability of solution evaluations. Their approach finds solutions with the best worst-case performance rather than computing worst-case scenarios for all the candidates. They realize this insight by modeling their population space as a min-heap. Although they motivate the problem well, their solution is not driven by any theoretical insight.

\section{Methods}
\label{sec:methods}
In this section, we present our proposed framework to use \ES for black-box minimax problems after providing a formal motivation for the same.

\ES are heuristic search methods inspired by natural evolution.  Given a fitness (objective) function, say $f:\Xspace \to \mathbb{R}$, these methods mutate (perturb) a population of genotypes (search points in $\Xspace$) over multiple generations (iterations). At each generation, the fitness of each genotype is evaluated based on $f$. The fittest genotypes among the current population are then recombined to generate the next population. At the end, the genotype (corresponds to a point in $\Xspace$) with the \emph{best} fitness value is returned as the best point that optimizes $f$. The notion of "best" refers to the minimum or maximum obtained value of $f$ in a minimization or maximization setup, respectively. Here, we briefly describe one form of \ES, in particular a simplified version of natural \ES that has recently gained significant attention by the machine learning community~\cite{lehman2017more}. As outlined in Algorithm~\ref{alg:es}, it represents the population with an isotropic Gaussian distribution over the search space $\Xspace$ with mean $\vmu$ and \emph{fixed} covariance $\sigma^2I$ where $I$ is the identity matrix. Over generations, the algorithm aims to maximize the expected fitness value $\mathbb{E}_{\vx\sim \mathcal{N}(\vmu, \sigma^2I)}[f(\vx)]$ with respect to the distribution's mean $\vmu$ via stochastic gradient ascent using a population size of $\lambda$ as shown in Line~\ref{line:perturbations} of Algorithm~\ref{alg:es}, which makes use of the re-parameterization and log-likelihood tricks with $\vx=\vmu + \sigma \vepsilon$~\cite{wierstra2014natural,salimans2017evolution}.: $$\mathbb{E}_{\vx\sim \mathcal{N}(\vmu, \sigma^2I)}[f(\vx)]=\mathbb{E}_{\vepsilon\sim \mathcal{N}(0, I)}[f(\vmu + \sigma \vepsilon)]$$
\begin{equation}
\nabla_{\vmu}\mathbb{E}_{\vx\sim \mathcal{N}(\vmu, \sigma^2I)}[f(\vx)]=
\frac{1}{\sigma}\mathbb{E}_{\vepsilon\sim \mathcal{N}(0, I)}[f(\vmu+ \sigma \vepsilon) \vepsilon]\label{eq:nes-gradient}
\end{equation}

\begin{algorithm}
	\small
	\caption{A Simplified  Example of Evolution Strategy (\ES)\newline
		\textbf{Input:}\newline 
		$~~$\hspace{\algorithmicindent}$\eta$ : learning rate \newline $\sigma$ : perturbation standard deviation, \newline
		$\lambda$ : number of perturbations (population size) \newline
		$~~$\hspace{\algorithmicindent}$T$ : number of iterations (generations), \newline
		$f:\mathcal{X}\to \mathbb{R}$ : fitness function}\label{alg:es}
	\begin{algorithmic}[1]
		\State ${\vmu}_0 \sim \mathcal{U}(\Xspace)$ 
		\For {$t=0$ \textbf{to} $T$}
		\For{$i=1$ \textbf{to} $\lambda$}
		\State $\vepsilon_i\sim \mathcal{N}(0, I)$
		\State $f_i \gets f(\vmu_t + \sigma\vepsilon_i)$
		\EndFor
		\State $\vmu_{t+1} \gets \vmu_{t} + \eta \frac{1}{\lambda \sigma} \sum_{i=1}^{\lambda}f_i \vepsilon_i$\label{line:perturbations}
		\EndFor
	\end{algorithmic}
\end{algorithm}
\paragraph{Descent Direction for Minimax} Next, we show that the direction computed by the random perturbations of the current mean (Line~\ref{line:perturbations} of Algorithm~\ref{alg:es}) can be used to approximate a descent direction of $\max_{\vy} \objfct(\vx,\vy)$. Prior to that and for completeness, we reproduce \cite{madry2017towards}'s proposition A.2 on the application of Danskin's theorem~\cite{danskin1966theory} for minimax problems that are continuously differentiable in $\vx$.
\begin{theorem}[\citet{madry2017towards}]
	Let $\vy^*$ be such that $\vy^* \in \Yspace$ and is a maximizer for$\;\max_{\vy} \objfct(\vx, \vy)$. Then, as long as it is nonzero, $-\nabla_{\vx}\objfct(\vx, \vy^*)$ is a descent direction for~$\;\max_{\vy} \objfct(\vx, \vy)$.
	\label{thm:madry}
\end{theorem}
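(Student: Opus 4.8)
The plan is to recognize this statement as a direct specialization of Danskin's theorem and to extract the descent claim from its directional-derivative characterization. First I would name the outer objective $\phi(\vx) := \max_{\vy \in \Yspace} \objfct(\vx, \vy)$, so that the assertion reduces to: $-\nabla_{\vx}\objfct(\vx, \vy^*)$ is a descent direction for $\phi$ at $\vx$, i.e. the one-sided directional derivative $\phi'(\vx; d)$ along that direction is strictly negative (equivalently, $\phi(\vx + t d) < \phi(\vx)$ for all sufficiently small $t > 0$).

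Next I would invoke Danskin's theorem, whose technical payload is that under compactness of $\Yspace$ and continuous differentiability of $\objfct$ in $\vx$, the function $\phi$ is directionally differentiable with
\[
\phi'(\vx; d) = \max_{\vy \in \Yspace^*(\vx)} d^\top \nabla_{\vx}\objfct(\vx, \vy), \qquad \Yspace^*(\vx) := \{\vy \in \Yspace : \objfct(\vx, \vy) = \phi(\vx)\}.
\]
I would treat the derivation of this envelope formula as the cited result (it is the crux of Danskin's theorem and rests on a uniform-convergence argument over the compact set of maximizers $\Yspace^*(\vx)$), and instead concentrate on specializing it to the candidate direction.

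The decisive step is to substitute $d = -\nabla_{\vx}\objfct(\vx, \vy^*)$ with $\vy^* \in \Yspace^*(\vx)$. When the inner maximizer is unique, $\Yspace^*(\vx) = \{\vy^*\}$, so $\phi$ is in fact differentiable at $\vx$ with $\nabla \phi(\vx) = \nabla_{\vx}\objfct(\vx, \vy^*)$, and the formula collapses to $\phi'(\vx; -\nabla \phi(\vx)) = -\|\nabla_{\vx}\objfct(\vx, \vy^*)\|_2^2$, which is strictly negative precisely because the gradient is assumed nonzero. By the standard first-order Taylor argument, strict negativity of the directional derivative yields $\phi\big(\vx - t\,\nabla_{\vx}\objfct(\vx, \vy^*)\big) < \phi(\vx)$ for all small enough $t > 0$, which is exactly the claim that $-\nabla_{\vx}\objfct(\vx, \vy^*)$ is a descent direction.

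The main obstacle I anticipate is the non-uniqueness of the inner maximizer: when $|\Yspace^*(\vx)| > 1$, the envelope formula returns a \emph{maximum} of inner products, and the single negative term contributed by $\vy^*$ no longer forces the whole maximum to be negative, so $-\nabla_{\vx}\objfct(\vx, \vy^*)$ need not point downhill for $\phi$. I would resolve this by reading the proposition at a point where $\vy^*$ is the relevant (in particular, unique) maximizer — precisely the regime in which $\phi$ is differentiable and the steepest-descent interpretation holds — and flagging this hypothesis explicitly. Everything else is a routine specialization of Danskin's formula to the steepest-descent direction.
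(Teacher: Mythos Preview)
The paper does not supply its own proof of this theorem; it simply refers the reader to \cite{madry2017towards}. Your outline is correct and is essentially the argument given there: define the envelope $\phi(\vx)=\max_{\vy}\objfct(\vx,\vy)$, invoke Danskin's directional-derivative formula $\phi'(\vx;d)=\max_{\vy\in\Yspace^*(\vx)} d^\top\nabla_\vx\objfct(\vx,\vy)$, and specialize to $d=-\nabla_\vx\objfct(\vx,\vy^*)$ to obtain a strictly negative quantity when the gradient is nonzero. Your flagging of the non-uniqueness caveat is apt and matches the reading in the cited reference, which implicitly works at points where the relevant maximizer is unique (so that $\phi$ is differentiable there); without that hypothesis the max over $\Yspace^*(\vx)$ need not be negative and the descent claim can fail, exactly as you note.
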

\begin{proof}
	See \cite{madry2017towards}.
\end{proof}

From Theorem~\ref{thm:madry} and the assumption that $L$ is twice continuously differentiable in $\vx$, we have the following corollary.
\begin{corollary}
	Let $\vy^*$ be such that $\vy^* \in \Yspace$ and is a maximizer for$\;\max_{\vy} \objfct( \vx, \vy)$. Then, for an arbitrary small $\sigma >0$ and $\mathbf{\epsilon}_1,\ldots, \mathbf{\epsilon}_\lambda \sim \mathcal{N}(0,I)$, 
	\begin{equation} -\frac{1}{\sigma \lambda}\sum^\lambda_{i=1}\objfct(\vx+\sigma\mathbf{\epsilon}_i, \vy^*)\mathbf{\epsilon}_i
	\label{eq:monte-carlo-descent}
	\end{equation} is a Monte Carlo approximation of a  descent direction for$\;\max_{\vy} \objfct(\vx, \vy)$.
	\label{corl:descent-direction}
\end{corollary}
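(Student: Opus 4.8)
The plan is to reduce the statement to two ingredients already in hand: the stochastic-gradient identity \eqref{eq:nes-gradient} underlying \ES, and the Danskin-type descent guarantee of Theorem~\ref{thm:madry}. First I would fix the maximizer $\vy^*$ and freeze the second argument, defining the single-variable function $f(\vx):=\objfct(\vx,\vy^*)$. Applying \eqref{eq:nes-gradient} to this $f$ with distribution mean $\vmu=\vx$ yields $\nabla_{\vmu}\mathbb{E}_{\vepsilon\sim\mathcal{N}(0,I)}[\objfct(\vmu+\sigma\vepsilon,\vy^*)]=\tfrac{1}{\sigma}\mathbb{E}_{\vepsilon\sim\mathcal{N}(0,I)}[\objfct(\vmu+\sigma\vepsilon,\vy^*)\,\vepsilon]$. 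The empirical average $\tfrac{1}{\sigma\lambda}\sum_{i=1}^{\lambda}\objfct(\vx+\sigma\vepsilon_i,\vy^*)\,\vepsilon_i$ is then exactly the $\lambda$-sample Monte Carlo estimator of this Gaussian-smoothed gradient, so the quantity in \eqref{eq:monte-carlo-descent} is the negation of an unbiased Monte Carlo estimator of $\nabla_{\vmu}\mathbb{E}_{\vepsilon}[\objfct(\vmu+\sigma\vepsilon,\vy^*)]$ evaluated at $\vmu=\vx$.

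The next step connects this smoothed gradient to the true gradient $\nabla_{\vx}\objfct(\vx,\vy^*)$, where the twice-continuous-differentiability hypothesis enters. I would expand $\objfct(\cdot,\vy^*)$ to second order about $\vx$, $\objfct(\vx+\sigma\vepsilon,\vy^*)=\objfct(\vx,\vy^*)+\sigma\,\nabla_{\vx}\objfct(\vx,\vy^*)^{\top}\vepsilon+\tfrac{\sigma^2}{2}\vepsilon^{\top}H\vepsilon+\text{(remainder)}$, with $H$ the Hessian of $\objfct(\cdot,\vy^*)$ at $\vx$. Multiplying by $\vepsilon$, dividing by $\sigma$, and taking the expectation over $\vepsilon\sim\mathcal{N}(0,I)$, the constant term drops because $\mathbb{E}[\vepsilon]=0$; the linear term gives $\mathbb{E}[\vepsilon\vepsilon^{\top}]\,\nabla_{\vx}\objfct(\vx,\vy^*)=\nabla_{\vx}\objfct(\vx,\vy^*)$ since $\mathbb{E}[\vepsilon\vepsilon^{\top}]=I$; and the quadratic term is a third moment of a centered Gaussian and therefore vanishes. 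Hence $\tfrac{1}{\sigma}\mathbb{E}_{\vepsilon}[\objfct(\vx+\sigma\vepsilon,\vy^*)\,\vepsilon]=\nabla_{\vx}\objfct(\vx,\vy^*)+O(\sigma^2)$, so for arbitrarily small $\sigma$ the smoothed gradient approximates the true gradient at $\vx$.

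Combining the two steps, \eqref{eq:monte-carlo-descent} is a Monte Carlo estimate of $-\nabla_{\vx}\objfct(\vx,\vy^*)$ up to a bias that vanishes as $\sigma\to 0$. By Theorem~\ref{thm:madry}, $-\nabla_{\vx}\objfct(\vx,\vy^*)$ is a descent direction for $\max_{\vy}\objfct(\vx,\vy)$ whenever it is nonzero, which establishes the claim.

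I expect the main obstacle to be making the two distinct notions of approximation precise and controlling their interaction. The $O(\sigma^2)$ smoothing bias requires bounding the Taylor remainder after the odd moments have cancelled, and it is precisely twice continuous differentiability—yielding a locally (ideally uniformly) bounded Hessian—that lets one dominate this remainder under the Gaussian expectation; the finite-$\lambda$ sampling error, by contrast, is only controlled in expectation. Care is also needed in the boundary case $\nabla_{\vx}\objfct(\vx,\vy^*)=0$, which Theorem~\ref{thm:madry} explicitly excludes: there the estimator approximates the zero vector and no descent is claimed, consistent with the hypothesis.
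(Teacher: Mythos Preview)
Your proposal is correct and follows essentially the same argument as the paper: a second-order Taylor expansion of $\objfct(\cdot,\vy^*)$ about $\vx$, cancellation of the constant and quadratic contributions via the first and third Gaussian moments, identification of the surviving term with $\nabla_{\vx}\objfct(\vx,\vy^*)$, and an appeal to Theorem~\ref{thm:madry}. The only cosmetic difference is that the paper reduces ``without loss of generality'' to the scalar case $\Xspace\subseteq\mathbb{R}$, whereas you carry out the computation directly in the multivariate setting using $\mathbb{E}[\vepsilon\vepsilon^{\top}]=I$; your treatment of the $O(\sigma^2)$ remainder and the $\nabla_{\vx}\objfct(\vx,\vy^*)=0$ edge case is in fact slightly more careful than the paper's.
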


\begin{proof}
	Without loss of generality, let $\Xspace \subseteq \mathbb{R}$. Then, since $\sigma$ is arbitrary small, we can approximate $\objfct$ with a second-order Taylor polynomial,
	\begin{equation}\objfct(x+ {\sigma\epsilon}, \vy^*)\approx 
	\objfct(x, \vy^*) + \objfct^\prime(x, \vy^*) \sigma\epsilon  + \frac{1}{2} \objfct^{\prime\prime}(x, \vy^*)\sigma^2\epsilon^2\;.
	\label{eq:tayloer-poly}
	\end{equation}
Based on~\eqref{eq:tayloer-poly} and the linearity of expectation, the expectation of $\objfct(x+\sigma\epsilon, \vy^*)\epsilon$ with respect to $\epsilon\sim \mathcal{N}(0, 1)$, $\mathbb{E}_{\epsilon}[\objfct(x+\sigma\epsilon, \vy^*)\epsilon]$,  can be written as
\begin{eqnarray}
\underbrace{\mathbb{E}_{\epsilon}[\objfct(x, \vy^*) \epsilon]}_{0}+ \underbrace{\mathbb{E}_{\epsilon}[ \objfct^\prime(x, \vy^*)\sigma\epsilon^2]}_{ \objfct^\prime(x, \vy^*)\sigma} + \frac{1}{2}
\underbrace{\mathbb{E}_{\epsilon}[  \objfct^{\prime\prime}(x, \vy^*) \sigma^2\epsilon^3]}_{0}\;,
\end{eqnarray}
where the values of the terms (written under the corresponding braces) come from the definition of \emph{central moments} of the Gaussian distribution~\cite{winkelbauer2012moments}. That is, $\mathbb{E}_{\epsilon}[\objfct(x+\epsilon, \vy^*)\epsilon]\approx\objfct^\prime(x, \vy^*)\sigma$. Thus, $-\objfct^\prime(x, \vy^*)$, which is---from Theorem~\ref{thm:madry}---a descent direction for  $\max_{\vy} \objfct(x, \vy)$, has a Monte Carlo estimation of the form
	\begin{eqnarray}
	-\objfct^\prime(x, \vy^*) &=& -\frac{1}{\sigma}\mathbb{E}_{\epsilon}[\objfct(x+\sigma\epsilon, \vy^*)\epsilon] \nonumber\\
	& \approx & -\frac{1}{\sigma \lambda}\sum^\lambda_{i=1}\objfct(x+\sigma\mathbf{\epsilon}_i, \vy^*)\mathbf{\epsilon}_i\;, \;\;\epsilon_i \sim \mathcal{N}(0, 1)\;, \nonumber
	\end{eqnarray}
as stated in~\eqref{eq:monte-carlo-descent}.\end{proof}
\begin{remark}
	Although Theorem~\ref{thm:madry} assumes  $\objfct$ to be continuously differentiable in $\vx$, it has been shown empirically in~\cite{madry2017towards} that breaking this assumption is not an issue in practice. 
\end{remark}

\begin{remark}
	Current state-of-the-art \ES algorithms are far more than stochastic gradient estimators due to their  \begin{inparaenum}[i)]
		\item ability to automatically adjust the scale on which they sample (step size adaptation)
		\item ability to model second order information (covariance matrix adaptation)
		\item invariance to rank-preserving transformations of objective values.
	\end{inparaenum} That is, they do not estimate the gradient, but a related quantity~\cite{JMLR:v18:14-467}. Our introduction of the simplified version~(Algorithm~\ref{alg:es}) was to show that a simplified \ES algorithm can conform to the guarantees of Theorem~\ref{thm:madry}. In the rest of the paper, we consider established \ES variants.
\end{remark}

\paragraph{Approximating Inner Maximizers} While Corollary~\ref{corl:descent-direction} motivated the use of \ES to approximate descent directions for $\max_{\vy \in \Yspace}\objfct(\vx,\vy)$, an inner maximizer must be computed beforehand. \ES can be used for the same. In other words, our use of \ES will be of two-fold: 1) computing an inner maximizer $\vy^*$ for $\objfct$; followed by 2) approximating a descent direction on $\vx$ for the outer minimization problem along which we proceed to compute the next inner maximizer, and the decent direction therein. 

However, the inner maximization problem of~\eqref{eq:minmax} can be non-concave, for which \ES may converge to a local inner maximizer.  Restart techniques are common in gradient-free optimization to deal with such cases~\cite{loshchilov2012alternative,hansen2009benchmarking}. Therefore, we use \ES with restarts when computing inner maximizers.

\paragraph{Convergence} Up to this point, we have seen how \ES can be  used iteratively to approximate descent directions at inner maximizers of~\eqref{eq:minmax}. Furthermore, we proposed to address the issue of non-concavity of the inner maximization problem through restarts. One fundamental question is \emph{how much do we step along the descent direction given an inner maximizer?} If we step too much or little, Corollary~\ref{corl:descent-direction} might not be of help anymore and we may get stuck in cycles similar to the non-convergent behavior of cyclic coordinate descent on certain problems~~\cite{powell1973search}. We investigate this question empirically in our experiments. 

One should note that cyclic non-convergent behavior is common among coevolutionary algorithms on \emph{asymmetrical} minimax problems~\cite{qiu2017new}. Furthermore, the outer minimization problem can be non-convex. Since we are using \ES to approximate the gradient (and eventually the descent direction), we resort to gradient-based restart techniques~\cite{loshchilov2016sgdr} to deal with cycles and non-convexity of the outer minimization problem. In particular, we build on Powell's technique~\cite{powell1977restart} to restart whenever the angle between momentum of the outer minimization and the current descent direction is non-positive.

One can observe that we employ gradient-\emph{free} restart techniques to solve the inner maximization problem and gradient-\emph{based} counterparts for the outer minimization problem. This is in line with our setup where computing an outer minimizer is guided by the gradient as a descent direction, while approximating the gradient is not a concern for computing an inner maximizer.

\paragraph{ES Variants} While the aforementioned discussion has been with respect to a very simplified form of \ES (Algorithm~\ref{alg:es}), there are other variants in the \ES family that differ in how the population is represented, mutated, and recombined. For instance, \emph{antithetic} (or \emph{mirrored}) sampling~\cite{brockhoff2010mirrored} can be incorporated to evaluate pairs of perturbations~$\vepsilon, -\vepsilon$ for every sampled $\vepsilon\sim \mathcal{N}(0,I)$. This was shown to reduce variance of the stochastic estimation (Line ~\ref{line:perturbations} of Algorithm~\ref{alg:es}) of ~\eqref{eq:nes-gradient}~\cite{salimans2017evolution}. Moreover, fitness shaping~\cite{wierstra2014natural} can be used to ensure invariance with respect to order-preserving fitness transformations. 

Instead of a fixed covariance $\sigma^2I$, the Covariance Matrix Adaptation Evolution Strategy (\cmaes) represents the population by a full covariance multivariate Gaussian~\cite{hansen2001completely}. Several theoretical studies addressed the analogy of \cmaes and the natural gradient ascent on the parameter space of the Gaussian distribution~\cite{akimoto2012analysis, glasmachers2010exponential}. 

In our experiments, we employ some of the aforementioned \ES variants in the \horse framework and compare their efficacy in computing a descent direction for the outer minimization problem

\begin{algorithm}
	\small
	\caption{\horse \newline
		\textbf{Input:}\newline 
		$~~$\hspace{\algorithmicindent}$T$: number of iterations,\newline~$~~$\hspace{\algorithmicindent}$v$: number of function evaluations (FEs) per iteration\newline
		~$~~$$s\in (0,0.5]$: budget allocation for descent direction}
	\label{alg:reckless}
	\begin{algorithmic}[1]
		\State $\vx_0 \sim \mathcal{U}(\Xspace)$
		\State $\vy_0 \sim \mathcal{U}(\Yspace)$ 
		\State $\vx_* \gets \vx_0$
		\State $\vy_* \gets \vy_0$
		\For{$t=1$~\textbf{to}~$T$}		
		\State $\vy_t \gets \arg\max_{\vy \in \Yspace} \objfct(\vx_{t-1}, \vy)$ by \ES with restarts~and~$(1-s)v$~FEs.\label{line:max}
		\If {$\objfct(\vx_{t-1}, \vy_{t}) < \objfct(\vx_*, \vy_*)$}\label{line:best-if}\Comment{best solution}
		\State $\vx_*\gets \vx_{t-1}$
		\State $\vy_*\gets \vy_t$
		\EndIf\label{line:best-end-if}
		\State $\vx_{t} \gets \arg\min_{\vx \in \Xspace} \objfct(\vx, \vy_t)$ by \ES with $sv$ FEs. \label{line:min}
		\If{ $(x_t-x_{t-1})^T(x_{t-1}-x_{t-2}) \leq 0$}\label{line:restart} \Comment{restart condition}
		\State $\vx_t \sim \mathcal{U}(\Xspace)$
		\State $\vy_t \sim \mathcal{U}(\Yspace)$ 
		\EndIf\label{line:restart-end}
		\EndFor
		\State \Return $\vx_*, \vy_*$
	\end{algorithmic}
\end{algorithm}

\paragraph{\horse for Black-Box Minimax Problems} Based on the above, we can  now present~\horse, our optimization framework for black-box minimax problems. As shown in Algorithm~\ref{alg:reckless}, the framework comes with 3 parameters: the number of iterations $T$, the number of function evaluations per iteration $v$, and the last parameter $s\in(0,0.5]$ which, along with $v$,  controls how much we should step along the descent direction for the outer minimization problem. Controlling the descent step is expressed in terms of the number of function evaluations per iteration to make the framework independent of the \ES algorithm used: some \ES variants (e.g., Algorithm~\ref{alg:es}) use a fixed learning rate and perturbation variance whereas others update them in an adaptive fashion. \horse starts by randomly sampling a pair $(\vx_0, \vy_0) \in \Xspace \times \Yspace$. It then proceeds to look for an inner maximizer using \ES with restarts using $(1-s)v$ function evaluations (Line~\ref{line:max}). While this is a prerequisite to step along a descent direction using $sv$ function evaluations (Line~\ref{line:min}), it also can be used to keep record of the best obtained solution so far $\vx_*$ and its "worst" maximizer $\vy_*$ as shown in Lines~\ref{line:best-if}--\ref{line:best-end-if}. Line~\ref{line:restart} check for the gradient-based restart condition for the outer minimization comparing the direction of the current descent $(\vx_t -\vx_{t-1})$ against that of the momentum $(\vx_{t-1}-\vx_{t-2})$. Note that the restart condition is checked only  when $\vx_{t-2},\vy_{t-2}$ (and subsequently $\vx_{t-1},\vy_{t-1}$) are valid. That is, for iterations right after a restart or iteration $t=1$, the restart check (Lines~\ref{line:restart}--\ref{line:restart-end}) does not take place.
\section{Experiments}
\label{sec:experiments}

To complement its theoretical perspective, this section presents a numerical assessment of \horse. First, we investigate the questions raised in Section~\ref{sec:methods} about \emph{\ES variants for approximating a descent direction for the outer minimization} and \emph{steps along the descent direction} on a set of benchmark problems. Second, we compare \horse with established coevolutionary algorithms for black-box minimax problems on the same set of problems in terms of scalability and convergence, given different function evaluation budgets. Finally, the applicability of \horse is demonstrated on a real-world example of digital filter design.
 
\subsection{Setup}
The six problems used in~\cite{qiu2017new} as defined in Table~\ref{tbl:benchmark-problems} are selected for our benchmark. In terms of symmetry, problems $\objfct_1$, $\objfct_5$ and $\objfct_6$ are symmetrical, while the rest are asymmetrical. On the other hand, only $\objfct_1$ and $\objfct_2$ are scalable with regard to dimensionality: $n_x$ and $n_y$. All the experiments were carried out on an Intel Core i7-6700K CPU @ $4.00GHz\times 8$ 64-bit Ubuntu machine with $64GB$ RAM. For statistical significance, each algorithm was evaluated over $60$ independent runs for each problem instance. The regret~\eqref{eq:regret} is used as the performance metric in a \emph{fixed-budget approach}~\cite{hansen2016coco}.  The first term of~\eqref{eq:regret} was set to the maximum of the  values returned by \texttt{basinhopping}, \texttt{differential\_evolution} from the \textsf{SciPy} Python library as well as \cmaes~\cite{pycmaes}, each set to their default parameters. Let \fes denote the number of function evaluations given to solve the black-box minimax problem~\eqref{eq:minmax}. In line with~\cite{qiu2017new}, we set its maximum to $10^5$, except for the scalability experiments where the number of function evaluations given grows with the problem dimensionality as $10^4\times n_x$ with $n_x=n_y$.

\paragraph{\horse Variants} We consider two main \ES variants for outer minimization (Line~\ref{line:min} of Algorithm~\ref{alg:reckless}): The first is Algorithm~\ref{alg:es}, which we refer to as \nes in the rest of this work. We used standardized fitness values---that is, we subtract the mean and divide by the standard deviation of the obtained function values for all the $\lambda$ perturbations. 
%$f_i=(f(\vmu_t + \sigma\vepsilon_i) - \textsf{mean}(\{f(\vmu_t + \sigma\vepsilon_j)\}_{1\leq j\leq \lambda}))/\textsf{std}(\{f(\vmu_t + \sigma\vepsilon_j)\}_{1\leq j\leq \lambda})$; 
We set the learning rate to $\eta=10^{-4}$ and $\sigma=0.25$ ($1/4$th of the search domain width~\cite{pycmaes}). The second variant is the state of the art \cmaes algorithm (without restart). We investigate the effectiveness of antithetic sampling as well as gradient-based restart (Lines~\ref{line:restart}--\ref{line:restart-end} of Algorithm~\ref{alg:reckless}) on both these algorithms. As a result, we have \emph{eight} variants of \horse, which we notate using the following symbols: C for \cmaes; N for \nes; R for restart; and A for antithetic sampling. For instance, \texttt{ACR} indicates that we use antithetic \cmaes in Line~\ref{line:min} of Algorithm~\ref{alg:reckless} and that the gradient-based restart (Lines~\ref{line:restart}--\ref{line:restart-end} of Algorithm~\ref{alg:reckless}) is enabled.  Similarly, \texttt{AC} denotes the same but with gradient-based restart being disabled. For all our experiments, 
\horse's inner maximizer is set to \cmaes with restarts~\cite{hansen2009benchmarking} (Line~\ref{line:max} of Algorithm~\ref{alg:reckless}).

\paragraph{Steps along the Descent Direction} As mentioned in Section~\ref{sec:methods}, we control how much we descend given an inner maximizer by varying $sv$, the number of function evaluations allocated for outer minimization per iteration---Line~\ref{line:min} of Algorithm~\ref{alg:reckless}. In our experiments, five values were tested, namely $s\in S=\{0.1, 0.2, 0.3, 0.4, 0.5\}$. Given the total number of function evaluations \fes given to \horse, the number of iterations $T$ and the evaluation budget per iteration $v$ are computed as follows.
\begin{equation}
T=\Bigg\lceil\sqrt{\frac{\fes}{(|S|+1)( \lambda_1 + 2s \lambda_2)}}\Bigg\rceil\;; v=\bigg\lfloor\frac{\fes}{T}\bigg\rfloor\;,
\label{eq:t}
\end{equation}
where $\lambda_1$ is the population size of \ES for inner maximization (Line~\ref{line:max}), and $\lambda_2$ is the population size of \ES for outer minimization (Line~\ref{line:min}). We borrowed their settings from \cmaes~\cite{pycmaes}: $\lambda_1 = 4 + \lfloor3 \log n_y \rfloor$ and $\lambda_2 = 4 + \lfloor3 \log n_x \rfloor$. The number of iterations $T$~\eqref{eq:t} can be viewed as the square root of rounds, in which both, the inner maximizer and the outer minimizer \ES evolve $|S|$ times (+1 for initializing the population) given the number of function evaluations \fes. This setup yields a noticeable difference in the number of function evaluations $sv$ allocated for the outer minimization over the $5$ considered values $S$. Table~\ref{tbl:s-values} provides an example of the parameters setup given $s$ and \fes.

\paragraph{Comparison Setup}
We compare \horse against three established algorithms for black-box minimax optimization, namely, Coevolution Alternating (\coeva), Coevolution Parallel (\coevp), and Minimax Differential Evolution (\mmde). The \textit{mutation rate}, \textit{population size}, and \textit{number of candidates replaced per generation} were set to $0.9, 10, 2$  - values which we had tuned for similar problems we had worked with in the past. Carefully tuning them for this specific test-bench is a direction for future work. For \mmde, we implemented the algorithm described in \cite{qiu2017new} in Python. We use the recommended hyperparameter values presented in their work. Specifically, $K_s = 190$, \textit{mutation probability}$=0.7$, \textit{crossover probability}$=0.5$, and \textit{population size} $= 100$.

\subsection{Results}
We present the results of our four experiments in the form of \emph{regret convergence}\footnote{Note that regret is the measure defined in Eq.~\eqref{eq:regret}. This is \emph{not} the MSE of Eq.~\eqref{eq:mse}.}  and \emph{critical difference} (CD) plots. In each of the regret plots, the dashed, bold lines represent the mean regret over $60$ independent runs, and are surrounded by error bands signifying one standard deviation from the mean. To confirm that the regret values we observe are not chance occurrences, tests for statistical significance are carried out. 

\paragraph{\horse Steps along the Descent Direction} Figure~\ref{fig:p-regret} shows the performance of \horse over different budget allocations $sv$ for outer minimization, where $s\in S$. For this experiment, we considered only the variant \texttt{N} (\nes without restart) because the number of function evaluations used by \nes are exactly governed by $sv$, whereas \cmaes might overspend or underutilize the given budget (due to early stopping). For high \fes, we observe that $s=0.3$ is the most robust value across the benchmark problems. This suggests that the outer minimization can do well with half of the budget given to the inner maximization. This asymmetric setup for budget allocation is an interesting direction of investigation for coevolutionary algorithms, where the two populations are typically given the same budget of function evaluation. Since we are comparing \horse with coevolutionary algorithms, we set $s=0.5$ for the rest of the experiments.

\paragraph{\horse Variants} The performance of \horse using eight variants of \ES for the outer minimization (Line~\ref{line:min} of Algorithm~\ref{alg:reckless}) is shown in Figure~\ref{fig:reckless-regret}. For low \fes, we observe no difference in the performance. Towards high \fes, variants with restart perform marginally better. No significant difference in performance was observed for antithetic variants over non-antithetic counterparts. It is interesting to note that \cmaes-based variants perform exceptionally well on the symmetrical, quadratic  problem $\objfct_1$. This is expected as \cmaes iteratively estimates a positive definite matrix which, for convex-quadratic functions, is closely related to the inverse Hessian. For the reset of the experiments, we use the variant \texttt{CR}.

\paragraph{Comparison of Algorithms for Minimax} Regret convergence of \horse and the considered coevolutionary algorithms, given different \fes, is shown in Figure~\ref{fig:feval-regret}. We re-iterate that the algorithms are re-run for each of the considered \fes in a fixed-budget approach~\cite{hansen2016coco}, rather than setting the evaluation budget to its maximum and recording the regret at the \fes of interest. With more \fes, the regret of \horse and \mmde consistently decrease. This is not the case for \coeva and \coevp due to their memoryless nature.  In terms of problem dimensionality, \mmde scales poorly in comparison to the other algorithms as shown in Figure~\ref{fig:scale-regret}, whereas \horse's performance is consistent on $\objfct_1$ and comparable with that of \coeva on $\objfct_2$ outperforming both \mmde and \coevp.
 
\paragraph{Statistical Significance} We employ the non-parametric Friedman test, coupled with a post-hoc Nemenyi test with $\alpha=0.05$~\cite{demvsar2006statistical}. The tests' outcome is summarized in the CD plots shown in Figure~\ref{fig:cd}. Overall, we infer that the experimental data is not sufficient to reach any conclusion regarding the differences in the algorithms' performance. A statistically significant difference was only found  between \horse and \coevp in the regret convergence experiment (Figure~\ref{fig:feval-regret}). We hope that with more benchmark problems, statistical significance can be established, and we leave this for future work.

\subsection{Application Example: Digital Filter Design}
 We apply \horse and the other algorithms discussed in this work on an application presented by~\citet{charalambous1979acceleration}. The work introduces the problem of designing a digital filter such that its amplitude $|\mathcal{H}(x,\theta)|$ approximates a function $\mathcal{S(\psi)}$ = $|1-2\psi|$. $|\mathcal{H}(x,\theta)|$ is defined as:
 \begin{equation}
 A \prod\limits_{k=1}^{K}\Big(\frac{1 + a_k^2 + b_k^2 + 2b_k(2cos^2\theta - 1) +  2a_k(1 + b_k)cos\theta}  {1 + c_k^2 + d_k^2 + 2d_k(2cos^2\theta - 1) +  2c_k(1 + d_k)cos\theta}\Big)^\frac{1}{2} \nonumber
 \end{equation}
 The authors then set this up as the following minimax problem:
 \begin{equation}
 \min\limits_{\vx} \max\limits_{0\leq\psi\leq1} |e(x, \psi)|, \text{where}\; e(x, \psi) = |\mathcal{H}(x,\theta)| - \mathcal{S(\psi)}  \nonumber
 \end{equation}
 In this formulation, $\vx=[A, a_1, a_2, \ldots, d_1, d_2]$ ($K = 2$ as chosen by the authors) to minimize and 1 variable ($\psi$) to maximize, i.e., $n_x=9$ and $n_y=1$ with $\Xspace=[-1,1]^{n_x}$, $\Yspace=[0,1]^{n_y}$. We evaluate the four algorithms on this minimax problem. The performance of the algorithms is expressed in terms of regret. Lower the regret, higher the value of $\psi$ explored for a given minimum $x$.  Table \ref{tbl:application} records the regrets of the best points calculated by the four algorithms and the optimal point reported in \cite{charalambous1979acceleration}. We evaluate each algorithm 60 times, for $10^5$ function evaluations, and report here median values. We see that \horse outperforms all other algorithms in finding the best worst-case solution to the digital filter problem.
  
  \begin{table}[h]
    \caption{Regret of the algorithms for the digital filter design.}
    \label{tbl:application}
    \resizebox{0.45\textwidth}{!}{
        \begin{tabular}{c c c c c}
            \toprule
              \textbf{\cite{charalambous1979acceleration}'s method} & \textbf{\textsc{Reckless}} & \textbf{\mmde} & \textbf{\coeva} & \textbf{\coevp} \\
            \midrule
             7.14$\times10^{-3}$ & \textbf{4.16}$\mathbf{\times10^{-13}}$ & 5.09$\times10^{-3}$ & 5.73$\times10^{-6}$ & 4.36$\times10^{-3}$
            \\
            \bottomrule
        \end{tabular}}
 \end{table}
\section{Conclusions}
\label{sec:conclusions}

In this paper, we presented \horse: a theoretically-founded framework tailored to black-box problems that are usually solved in a coevolutionary setup. Our proposition employed the stochastic gradient estimation of \ES motivated by the experimental success of using gradients at approximate inner maximizers of minimax problems. 

As demonstrated on scalable benchmark problems and a real-world application, \horse outperforms the majority of the established coevolutionary algorithms, particularly on high-dimensional problems.  Moreover, we found that minimax problems can be solved where outer minimization is given half of the inner maximization's evaluation budget. Due to the limited number of evaluators (benchmark problems), statistical significance could not be established. In our future work, we hope that a larger set of benchmark problems can address this issue.

\section*{Acknowledgment}

This work was supported by the MIT-IBM Watson AI Lab
and CSAIL CyberSecurity Initiative.

\bibliographystyle{ACM-Reference-Format}
\bibliography{bibliography}

\begin{figure*}[h!]
	\begin{tabular}{ccc}
		\includegraphics[width=0.25\textwidth]{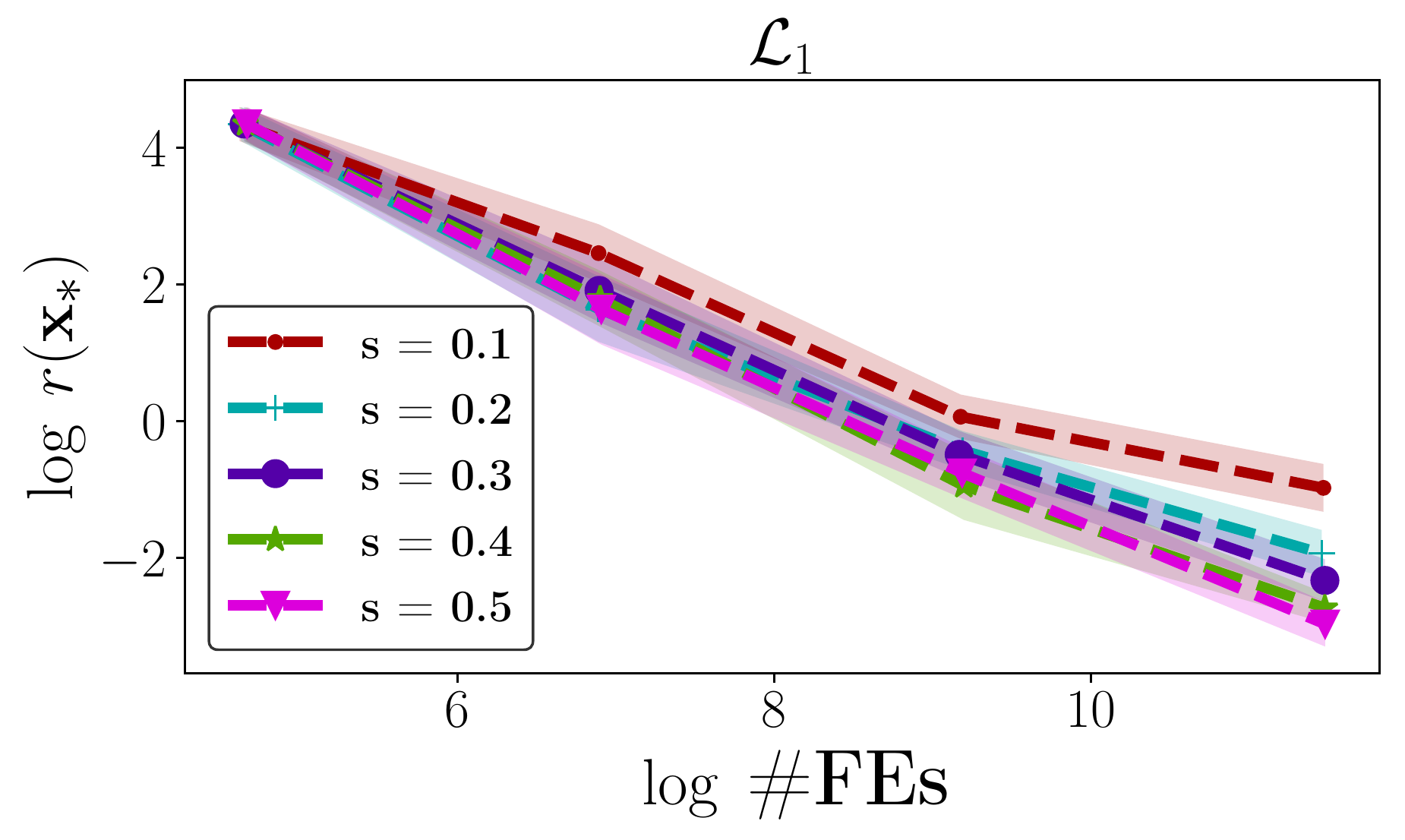} & \includegraphics[width=0.25\textwidth]{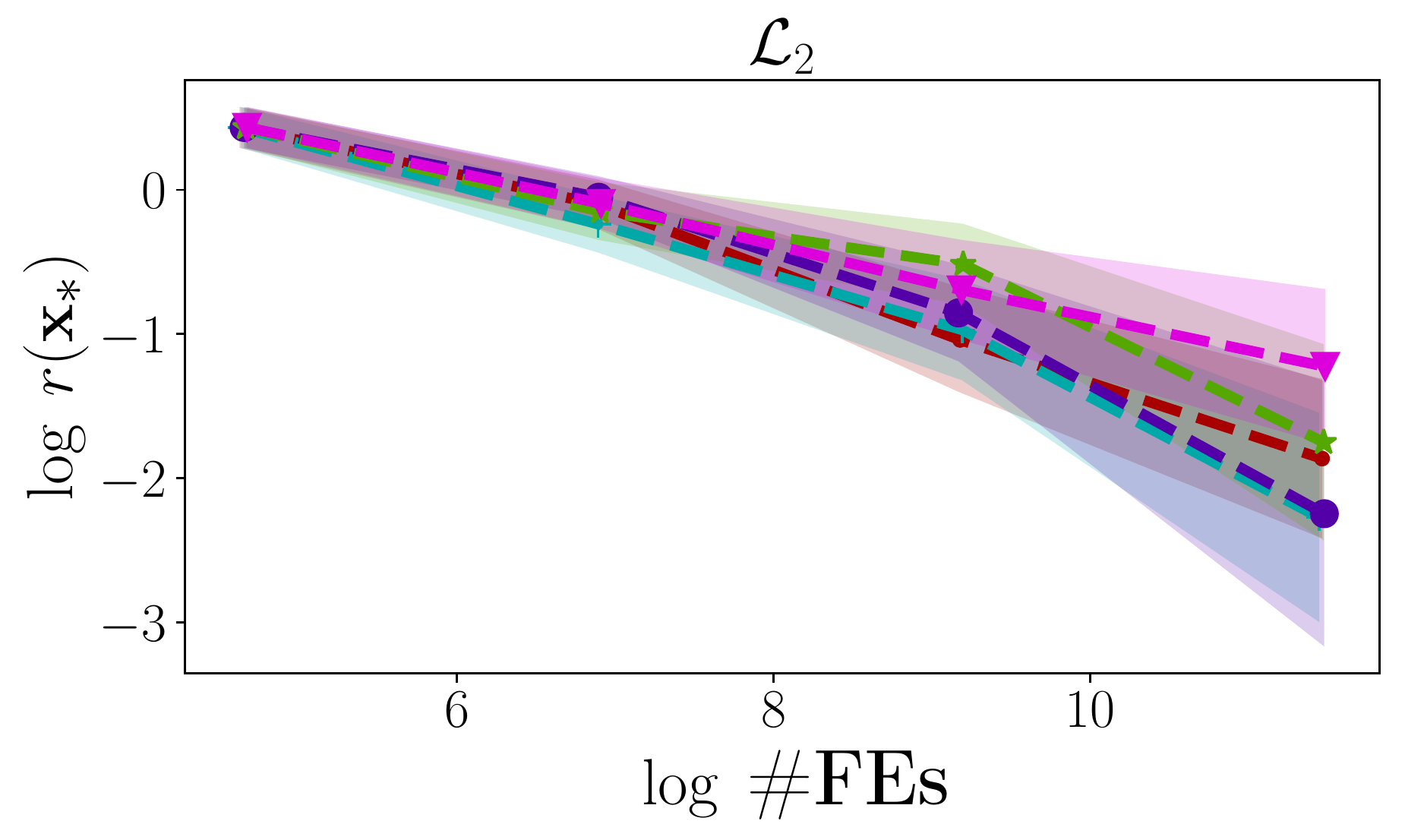} & \includegraphics[width=0.25\textwidth]{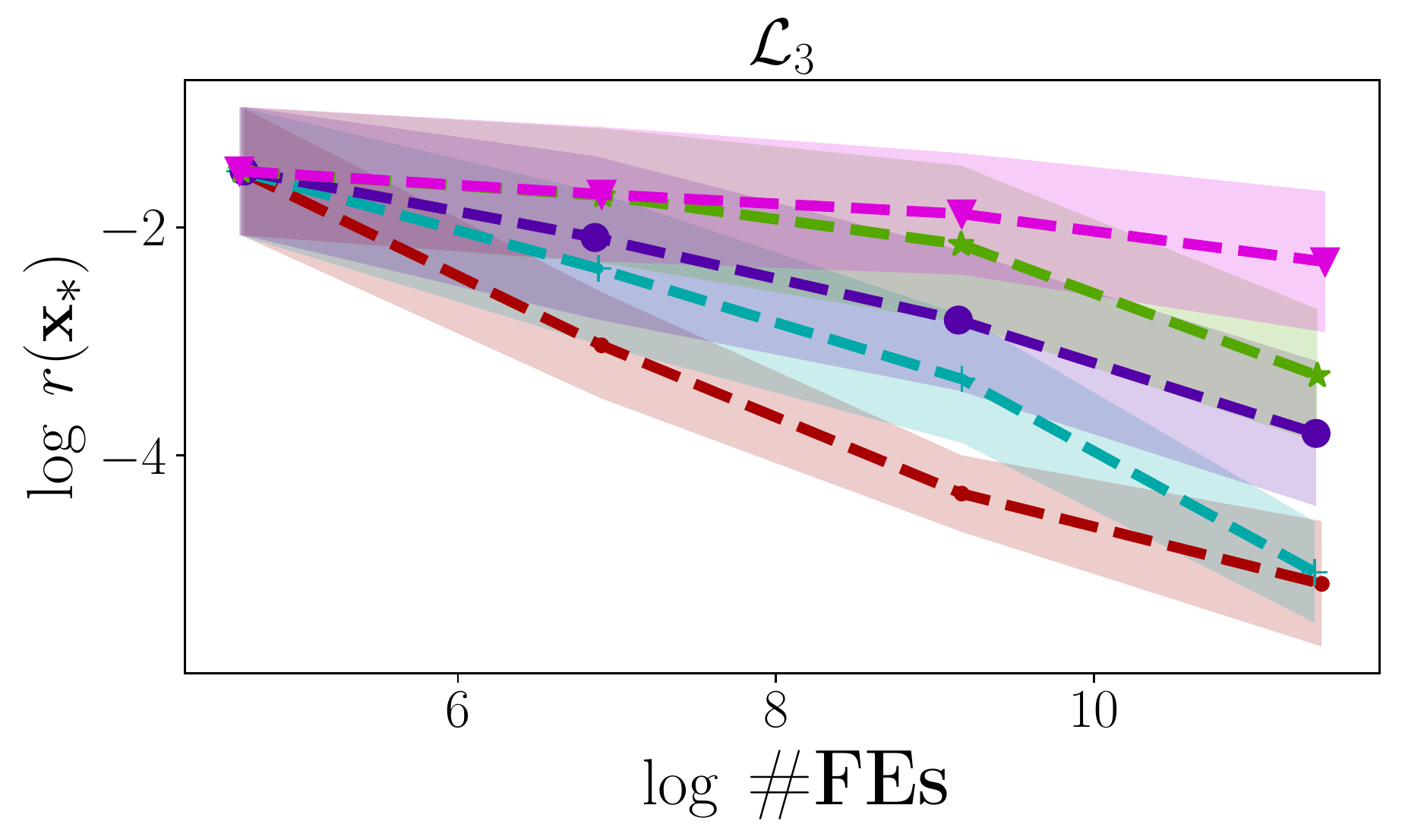}\\
		\includegraphics[width=0.25\textwidth]{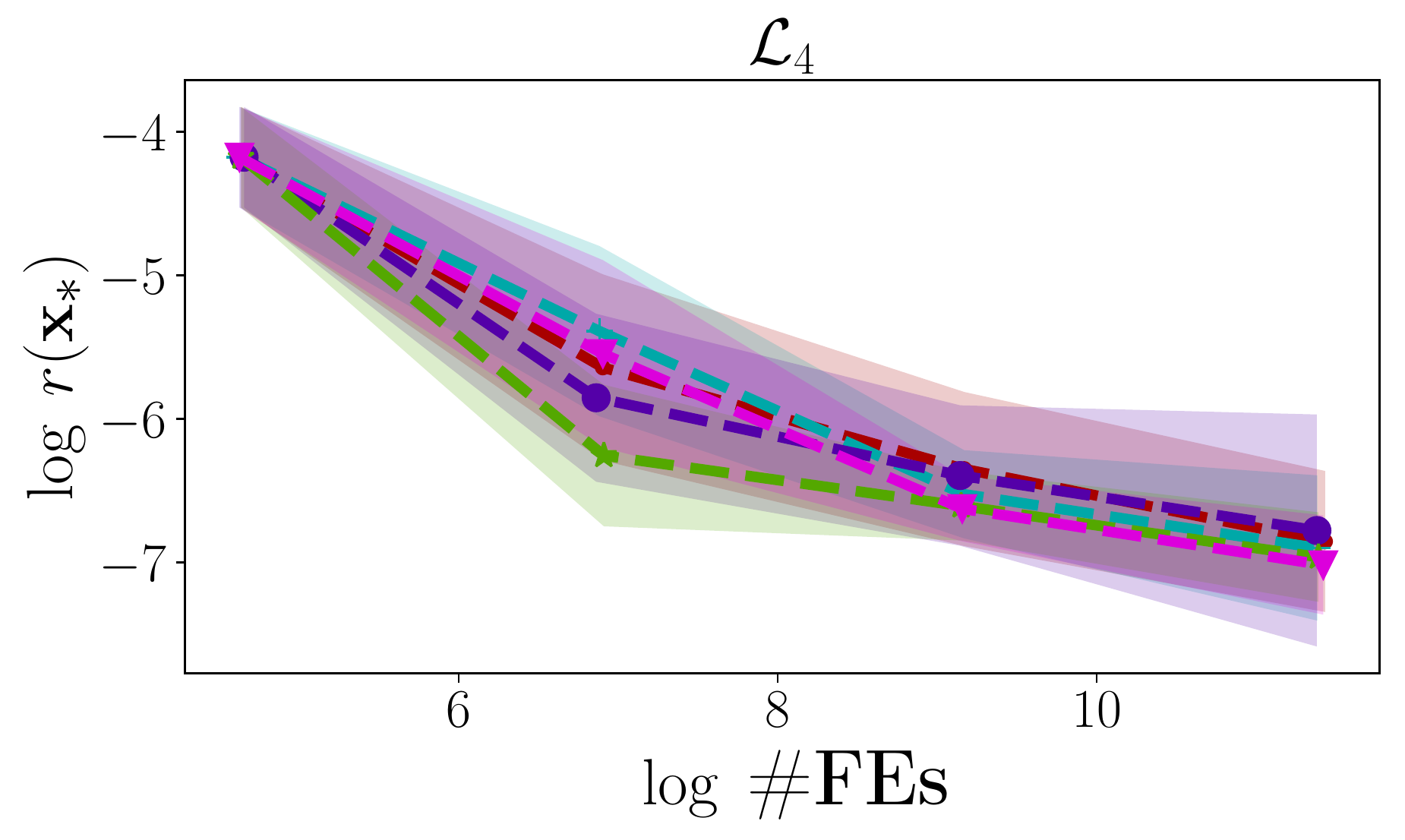} & \includegraphics[width=0.25\textwidth]{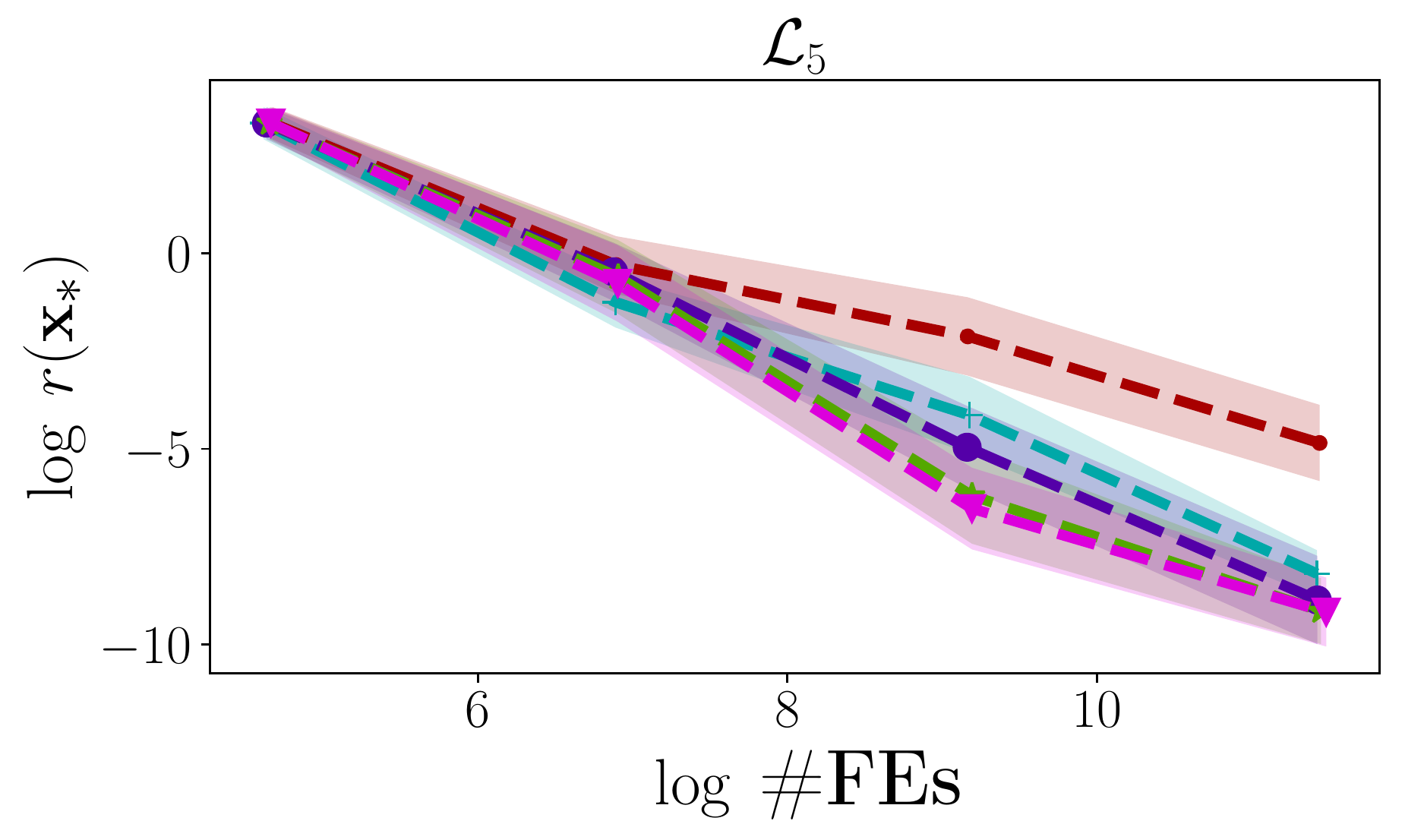} & \includegraphics[width=0.25\textwidth]{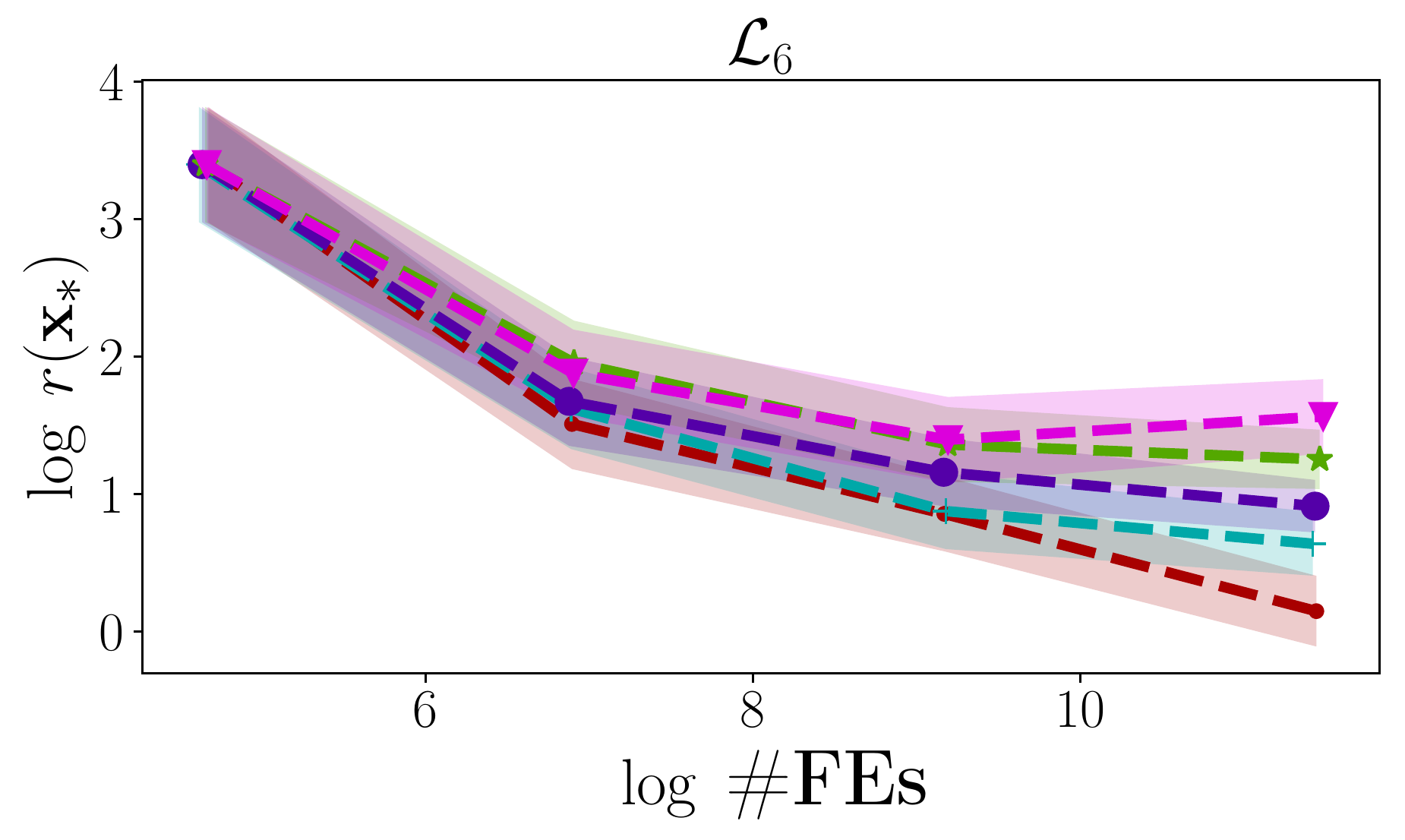}\\
	\end{tabular}
	\caption{\horse Steps along the Descent Direction. The markers indicate the average regret value surrounded by error bands signifying one standard deviation, obtained using $60$ independent runs of the variant \texttt{N}: \nes without restart.}
	\label{fig:p-regret}
\end{figure*}

\begin{figure*}[h!]
	\begin{tabular}{ccc}
		\includegraphics[width=0.25\textwidth]{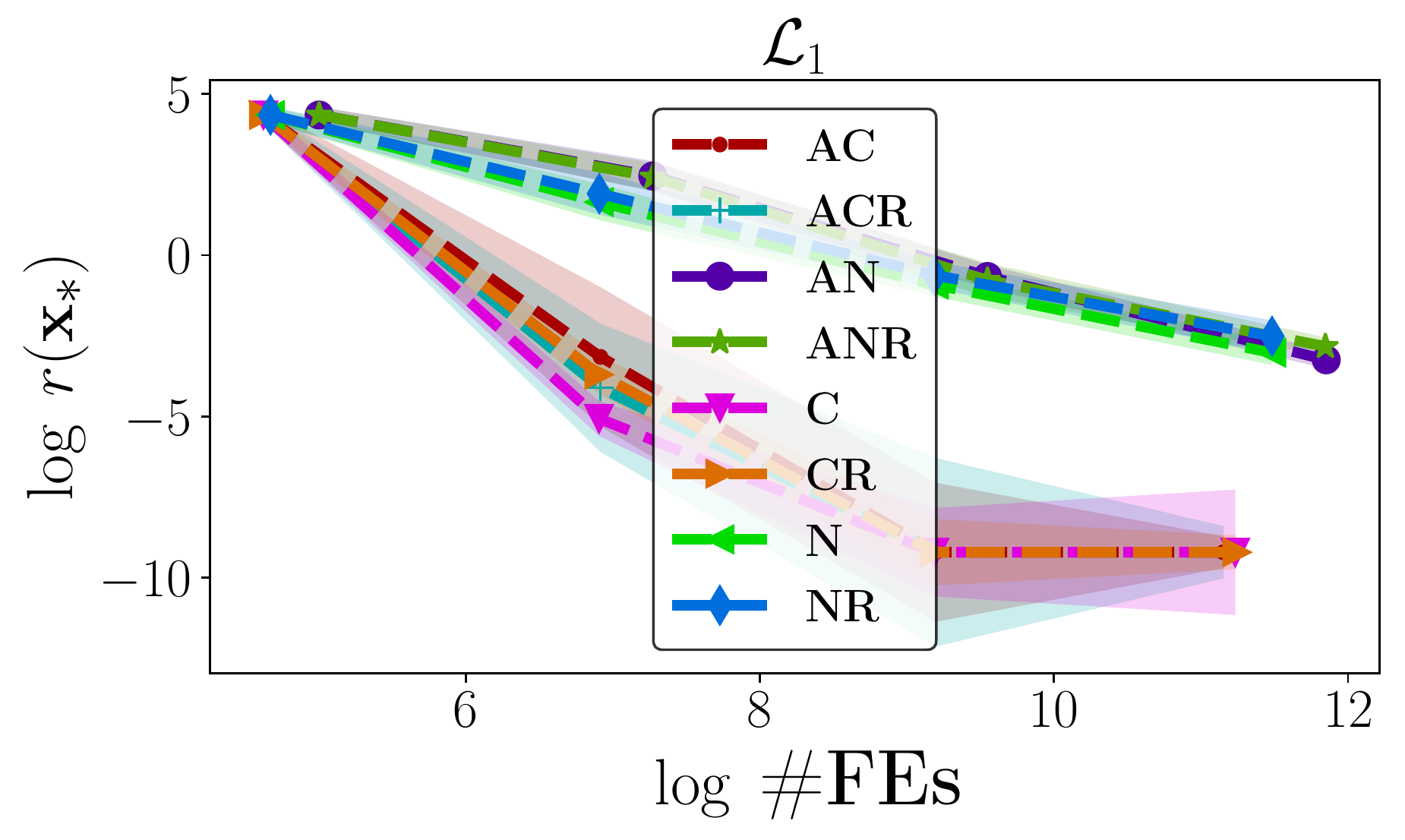} & \includegraphics[width=0.25\textwidth]{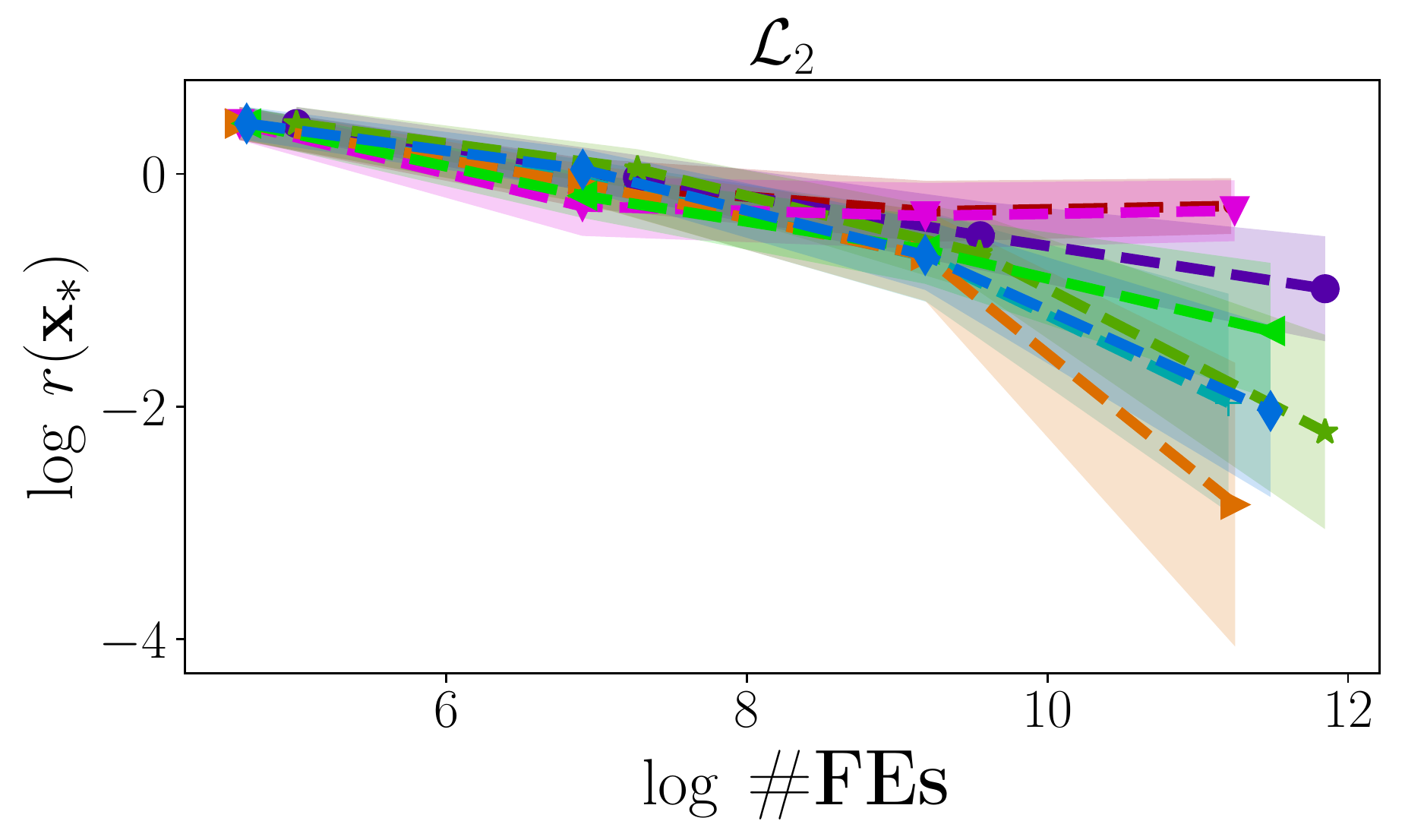} & \includegraphics[width=0.25\textwidth]{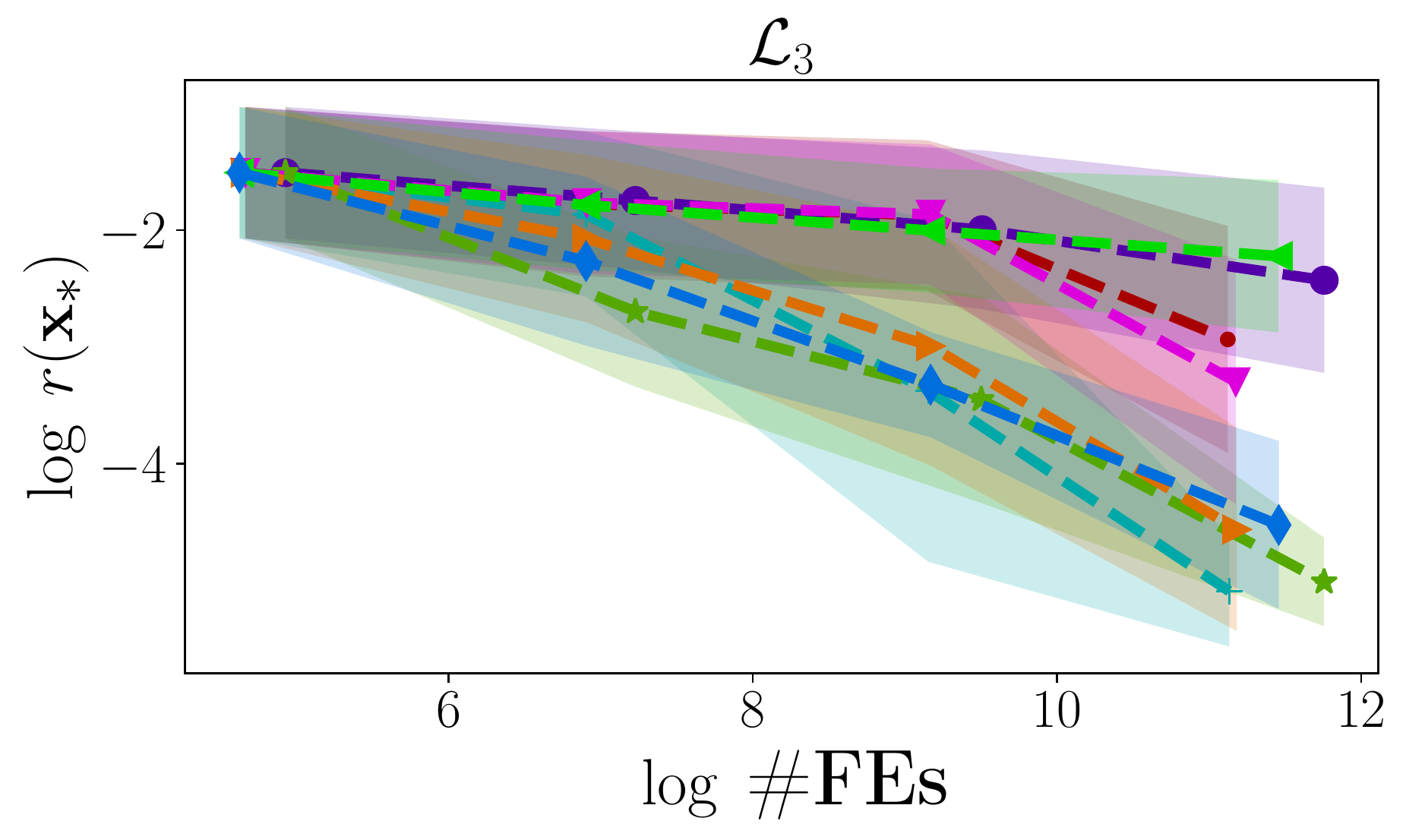}\\
		\includegraphics[width=0.25\textwidth]{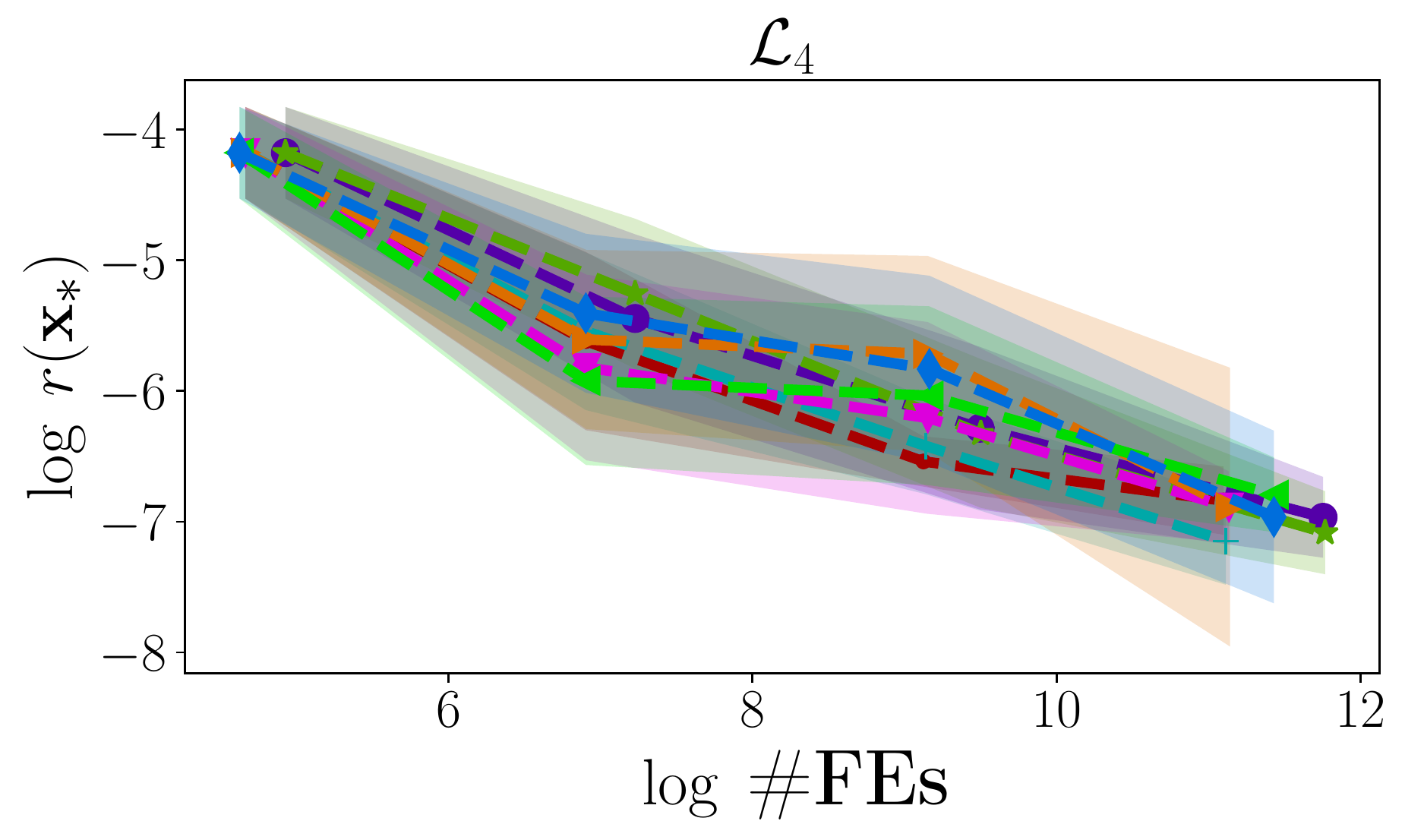} & \includegraphics[width=0.25\textwidth]{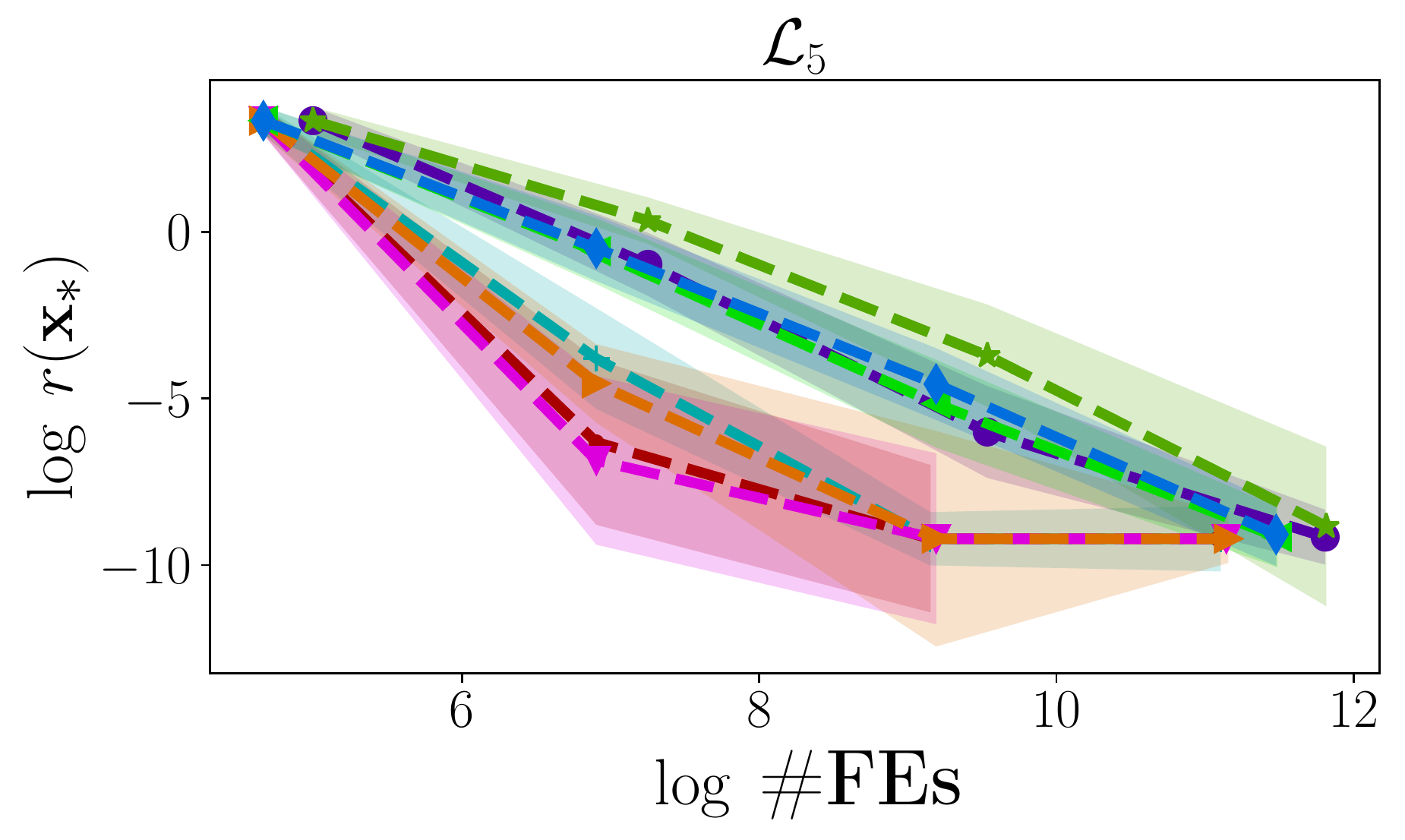} & \includegraphics[width=0.25\textwidth]{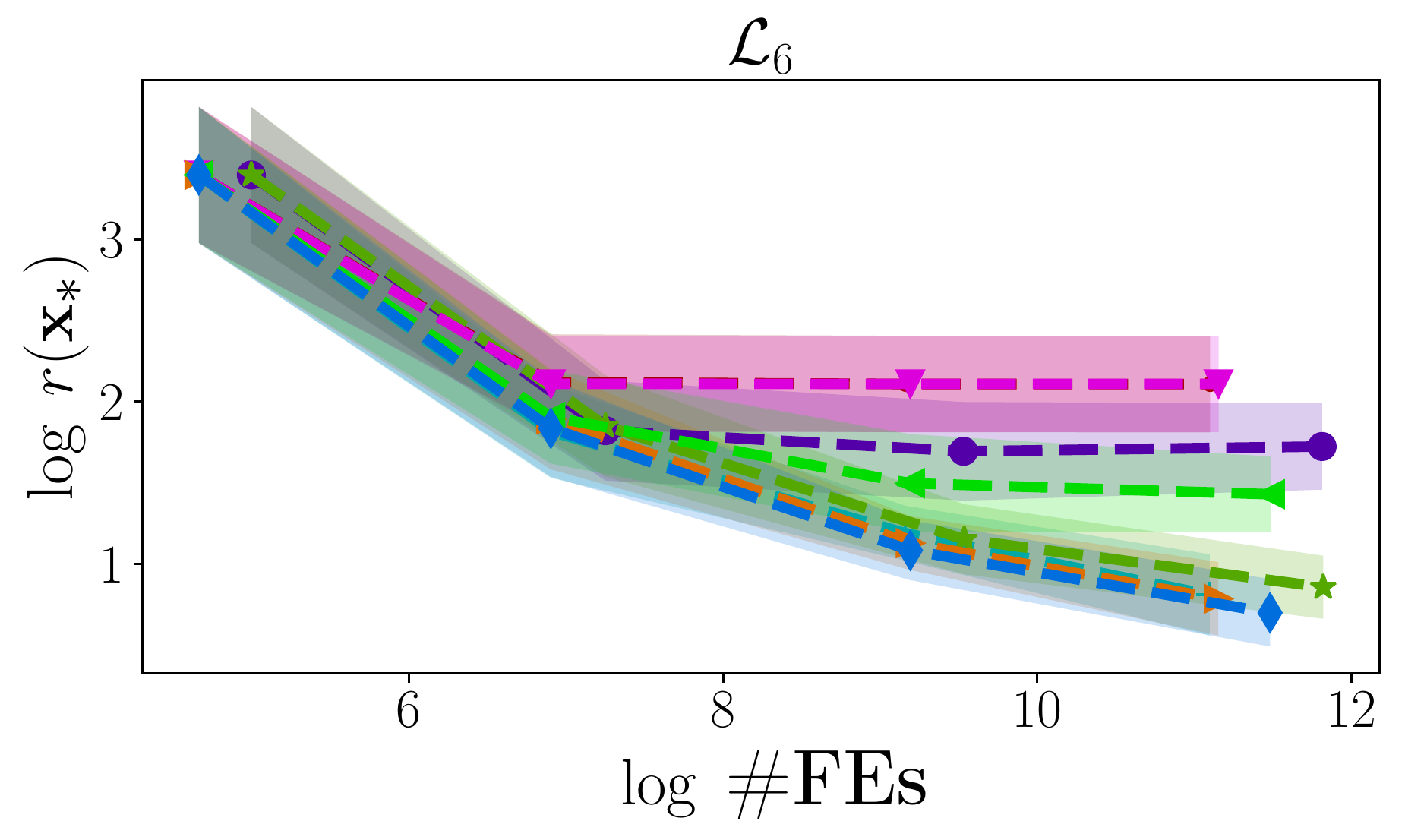}\\
	\end{tabular}
	\caption{\horse Variants. Each variant is denoted by a set of symbols as defined in Section~\ref{sec:experiments}. A: Antithetic, N: \nes, C: \cmaes, R: gradient-based restart. The markers indicate the average regret value surrounded by error bands signifying one standard deviation, obtained using $60$ independent runs.}
	\label{fig:reckless-regret}
\end{figure*}

\begin{figure*}[h!]
	\begin{tabular}{ccc}
		\includegraphics[width=0.25\textwidth]{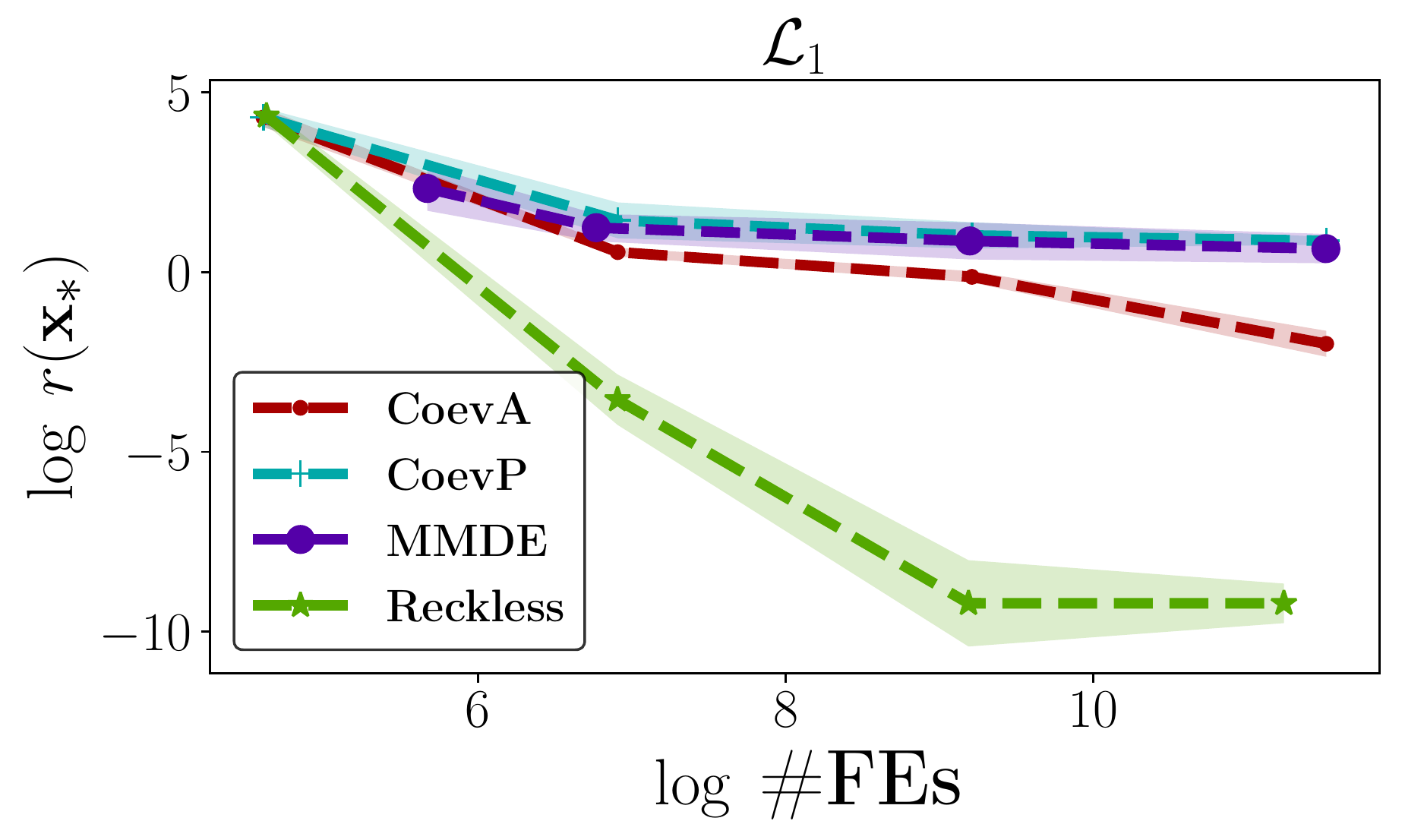} & \includegraphics[width=0.25\textwidth]{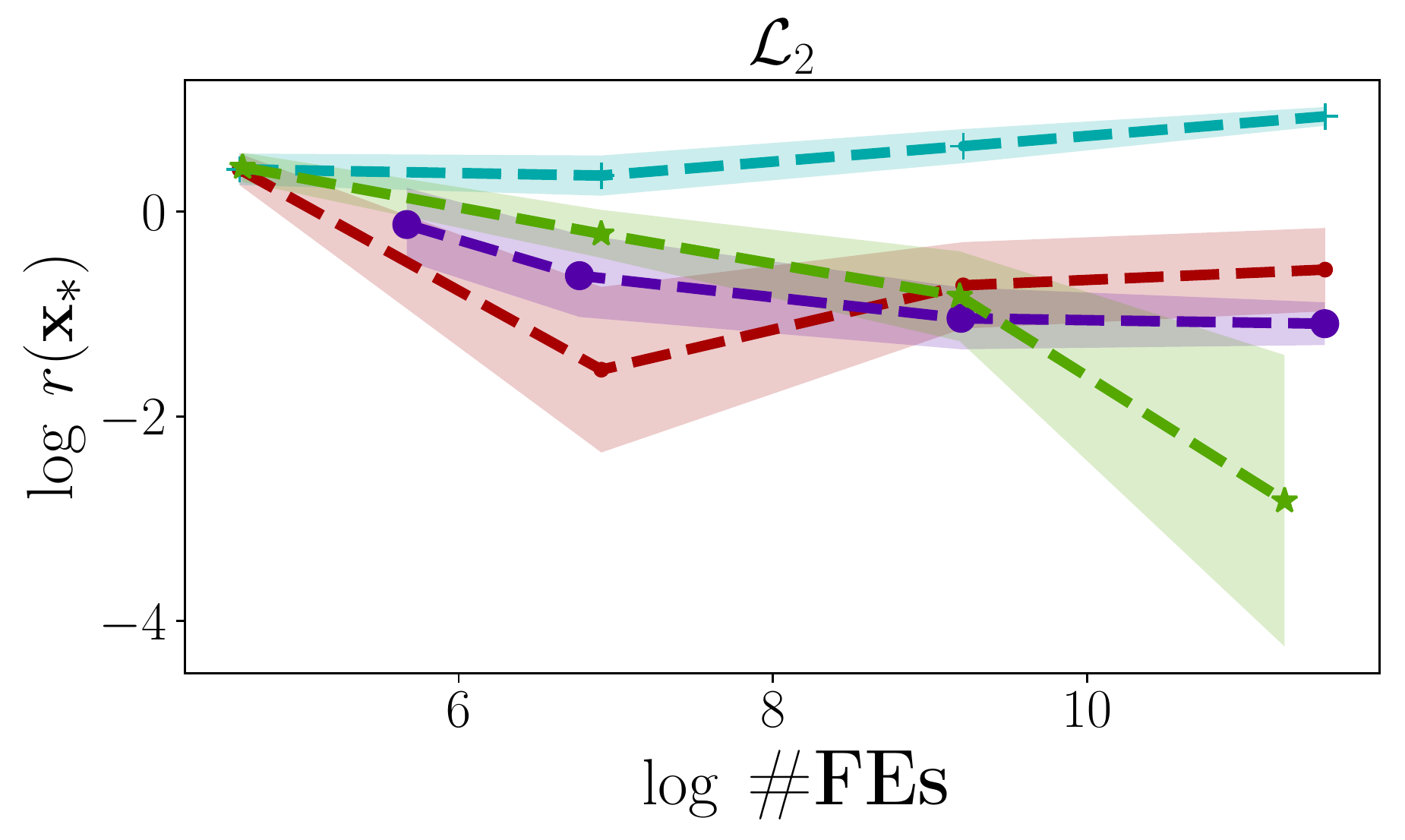} & \includegraphics[width=0.25\textwidth]{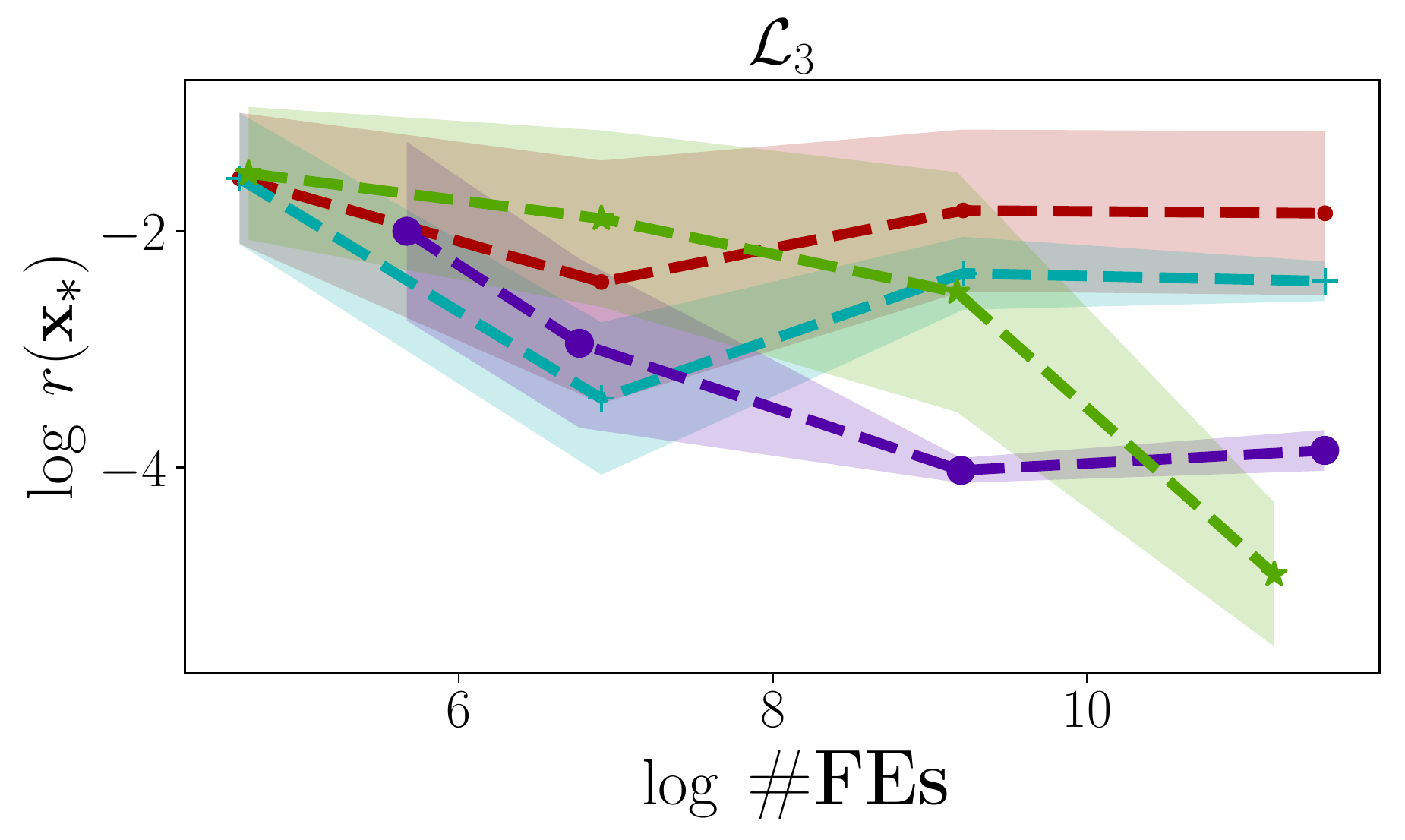}\\
		\includegraphics[width=0.25\textwidth]{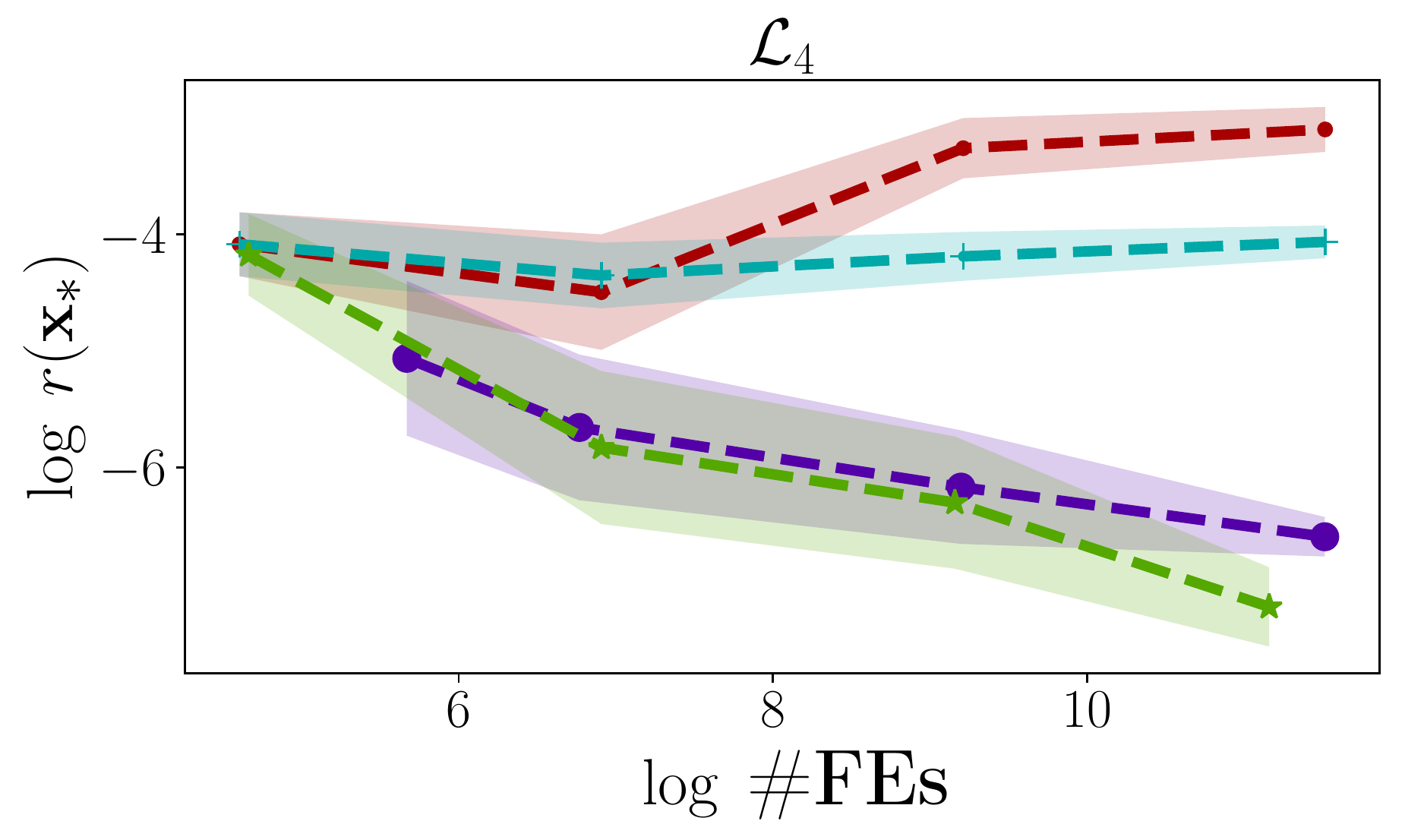} & \includegraphics[width=0.25\textwidth]{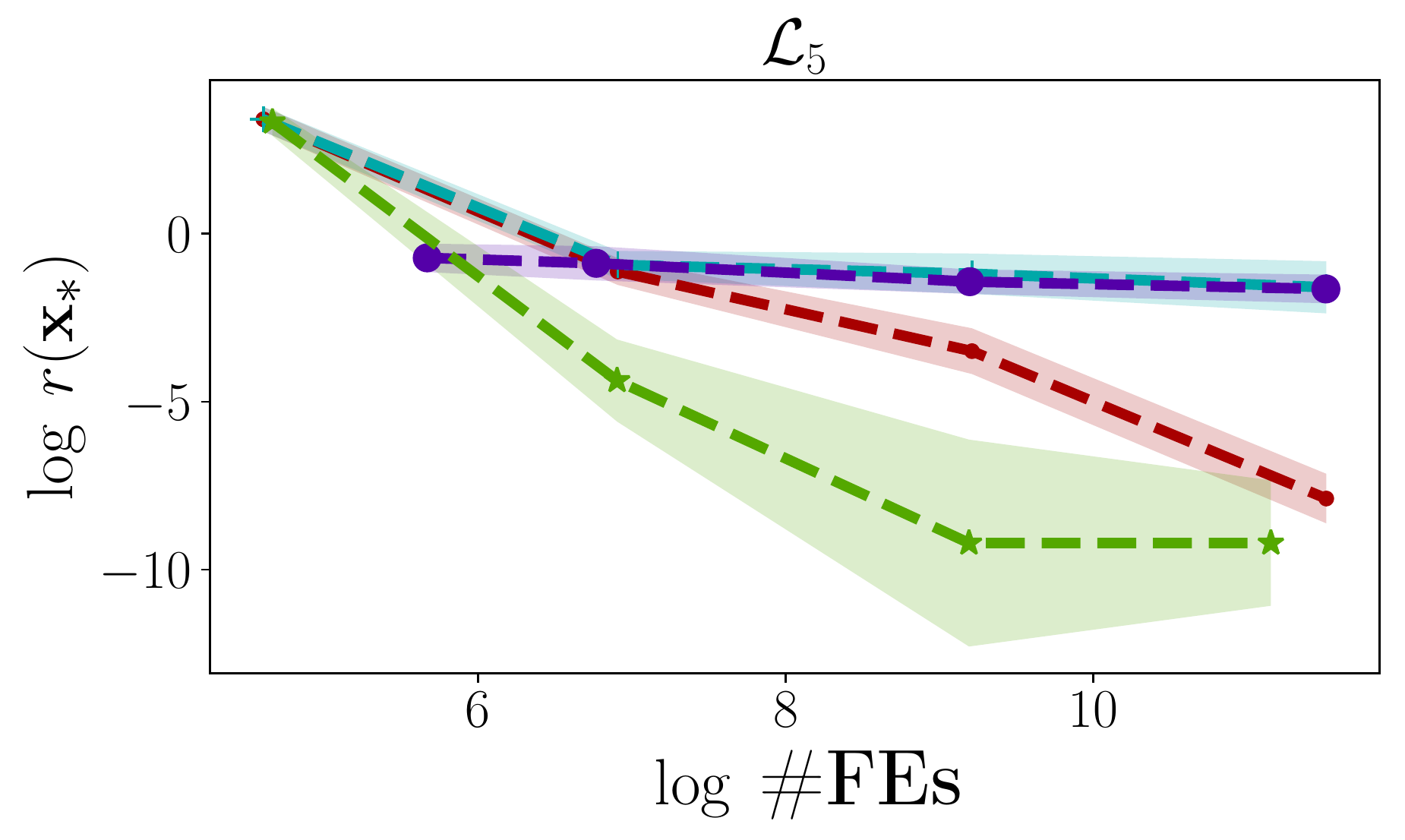} & \includegraphics[width=0.25\textwidth]{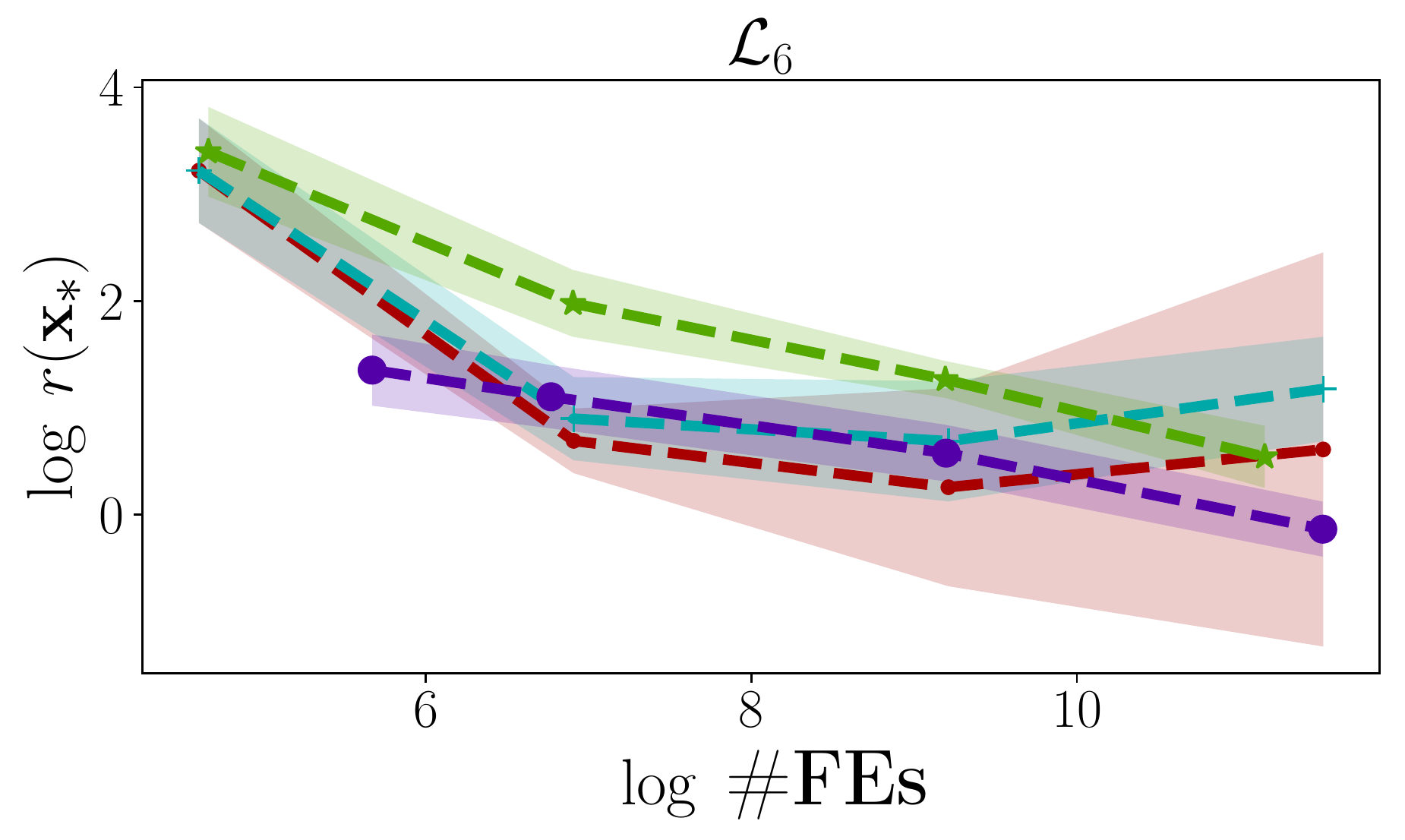}\\
	\end{tabular}
	\caption{Regret Convergence of Minimax Algorithms. The markers indicate the average regret value surrounded by error bands signifying one standard deviation, obtained using $60$ independent runs. For \horse, the \texttt{CR} variant is used with $s=0.5$.}
	\label{fig:feval-regret}
\end{figure*}

\begin{table*}[h]
	\centering
	\caption{ Description of benchmark problems. $\objfct_1$ and $\objfct_2$ were scaled  with~$n_x,n_y\in \{2, 5, 10, 15, 20, 40, 50\}$ in scalability experiments.}
	\label{tbl:benchmark-problems}
	\resizebox{0.79\textwidth}{!}{
		\begin{tabular}{p{0.3cm}p{7cm}llll}
			\toprule
			\textbf{$\objfct$} & \textbf{Definition} & $\Xspace$ & $\Yspace$ & $\vx^*$ & $\vy^*$ \\
			\midrule
			$\objfct_1$ & $\sum^{n}_{i=1} (x_i-5)^2 - (y_i-5)^2$ &
			$[0,10]^3$ & $[0,10]^3$ & $[5,5,5]$ &  $[5,5,5]$\\
			\midrule
			$\objfct_2$ & $\sum^{n}_{i=1} \min\{3-0.2x_i + 0.3y_i, 3 +0.2 x_i -0.1 y_i \}$ &
			$[0,10]^3$ & $[0,10]^3$ & $[0,0,0]$ & $[0,0,0]$\\
			\midrule
			$\objfct_3$ & $\frac{\sin(x-y)}{\sqrt{x^2+y^2}}$ &
			$(0,10]$ & $(0,10]$ & $10$ & $2.125683$\\
			\midrule
			$\objfct_4$ & $\frac{\cos\big(\sqrt{x^2+y^2}\big)}{\sqrt{x^2+y^2}+10}$ &
			$[0,10]$ & $[0,10]$ &  $7.044146$ & $0$ or $10$\\
			\midrule
			$\objfct_5$ & $100 (x_2 - x^2_1)^2 + (1-x_1)^2 - y_1(x_1 + x^2_2) - y_2(x^2_1 + x_2)$ &
			$[-0.5,0.5]\times [0,1]$ & $[0,10]^2$ &$[0.5,0.25]$ & $[0,0]$\\
			\midrule
			$\objfct_6$ & $(x_1-2)^2 + (x_2 - 1)^2 + y_1(x^2_1-x_2)+ y_2(x_1 + x_2 -2)$ &
			$[-1,3]^2$ & $[0,10]^2$ & $[1,1]$ & any $\vy \in \Yspace$\\
			\bottomrule
	\end{tabular}}
\end{table*}

\begin{table}
	\caption{\horse setup given a budget allocation $s$ for steps along the descent direction and  a finite number of function evaluations~\fes. In each of the $T$ iterations, the inner maximization and the outer minimization use $(1-s)v$ and $sv$ function evaluations, respectively. }
	\label{tbl:s-values}
	\begin{tabular}{c|cccc|cccc}
		\toprule
		$\bm{s}$ &\textbf{\fes} &  $\bm{T}$& $\bm{(1-s)v}$& $\bm{sv}$ &\textbf{\fes} &  $\bm{T}$& $\bm{(1-s)v}$& $\bm{sv}$\\
		\toprule
		$0.1$ & \multirow{5}{*}{$10^2$} & $1$ & $90$ & $10$ & \multirow{5}{*}{$10^5$} & $41$ & $2195$ & $243$ \\
		$0.2$ &  & $1$ & $80$ & $20$ &  & $38$ & $2105$ & $526$ \\
		$0.3$ &  & $1$ & $70$ & $30$  &  & $36$ & $1944$ & $833$ \\
		$0.4$ &  & $1$ & $60$ & $40$ &  & $34$ & $1764$ & $1176$ \\
		$0.5$ &  & $1$ & $50$ & $50$  &  & $32$ & $1562$ & $1562$ \\
		\bottomrule
	\end{tabular}
\end{table}

\begin{figure}[h]
	\begin{tabular}{c}
		\includegraphics[width=0.33\textwidth]{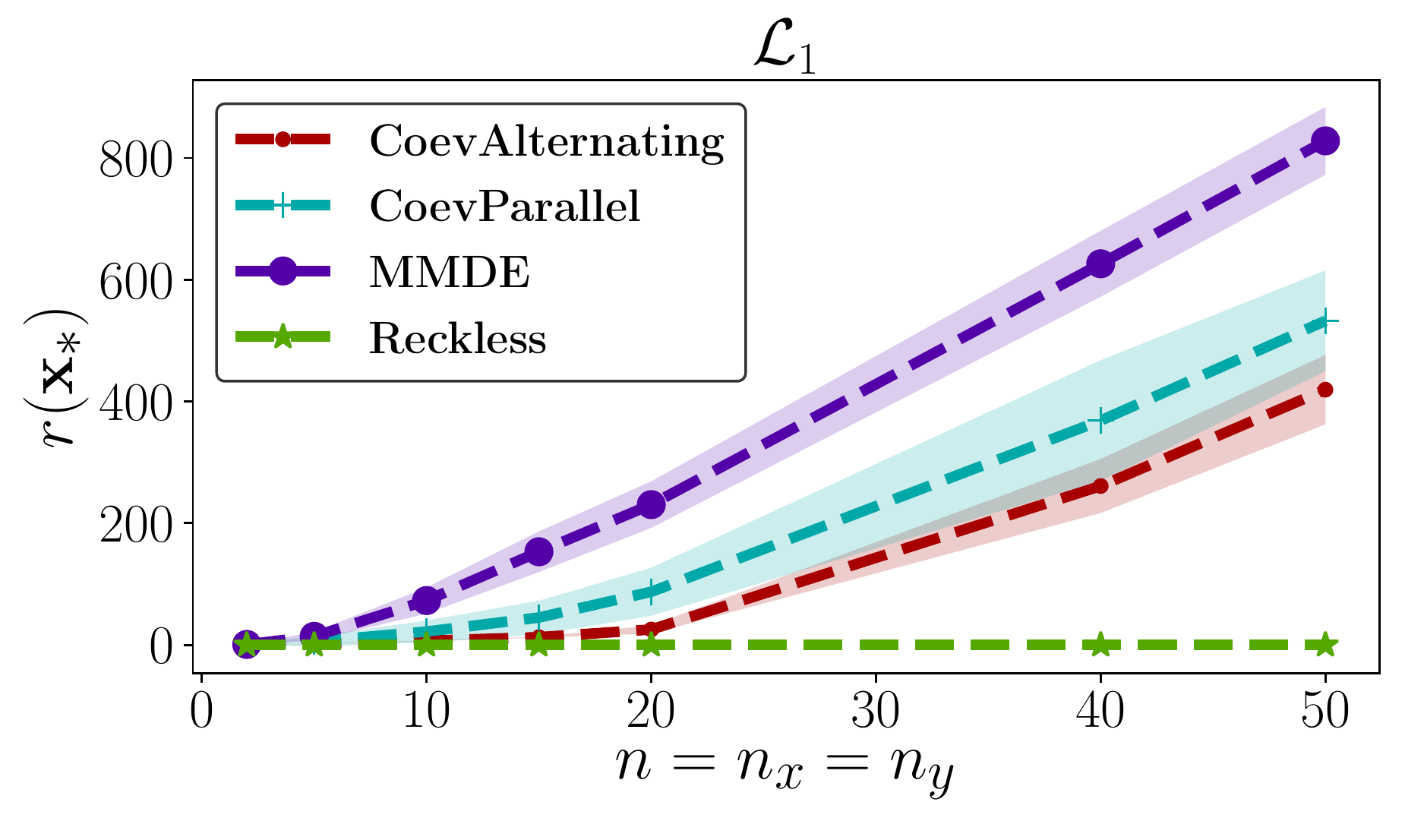} \\ \includegraphics[width=0.33\textwidth]{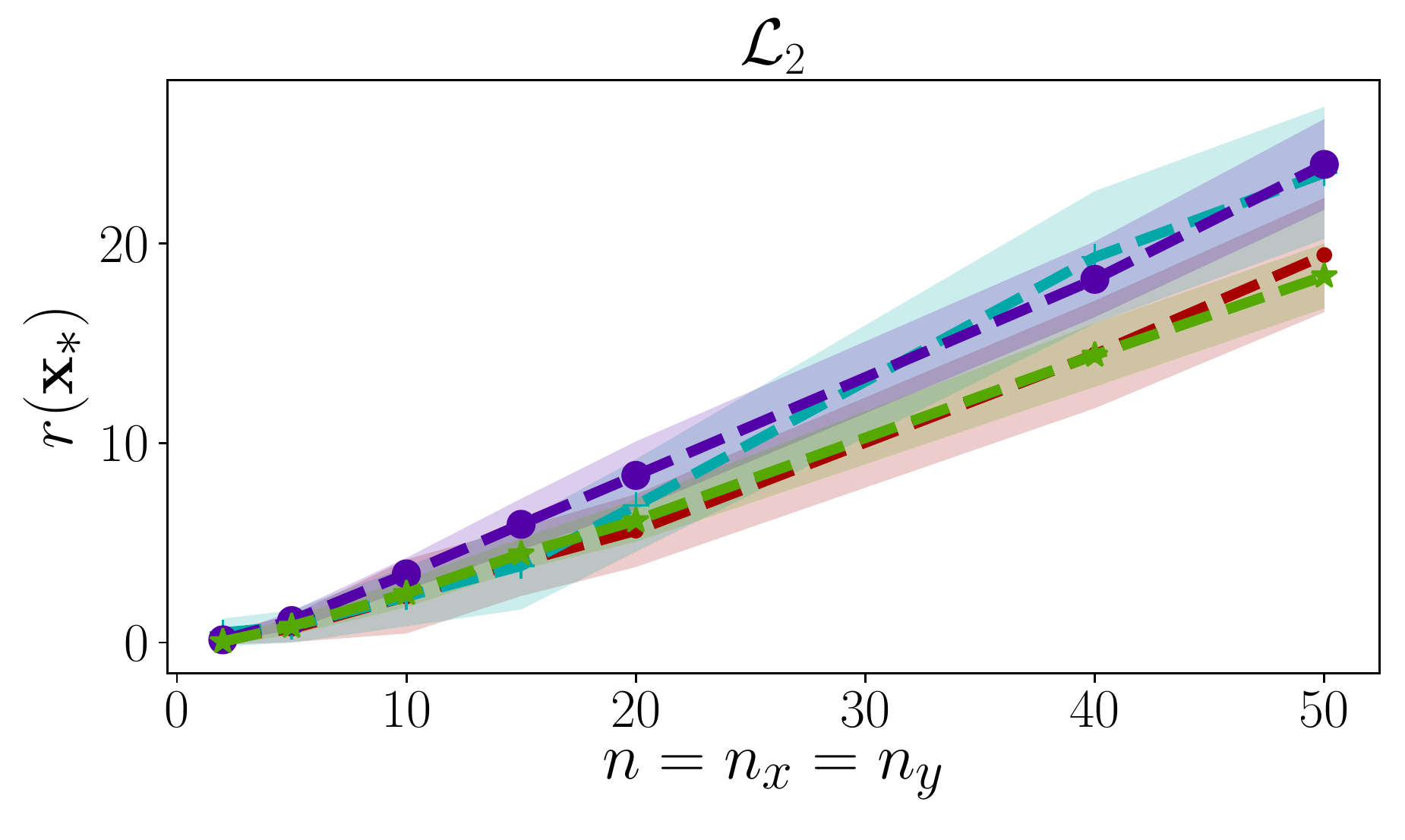} \\
	\end{tabular}
	\caption{Scalability Experiments. The compared algorithms were run on $\objfct_1$ and $\objfct_2$ with  $n_x=n_y\in \{2, 5, 10, 15, 20, 40, 50\}$ and $\fes=10^4\times n_x$. The markers indicate the average regret value surrounded by error bands signifying one standard deviation, obtained using $60$ independent runs. For \horse, the \texttt{CR} variant is used with $s=0.5$.}
	\label{fig:scale-regret}
\end{figure}

\begin{figure}[h]
	\begin{tabular}{c}
		\includegraphics[width=0.35\textwidth]{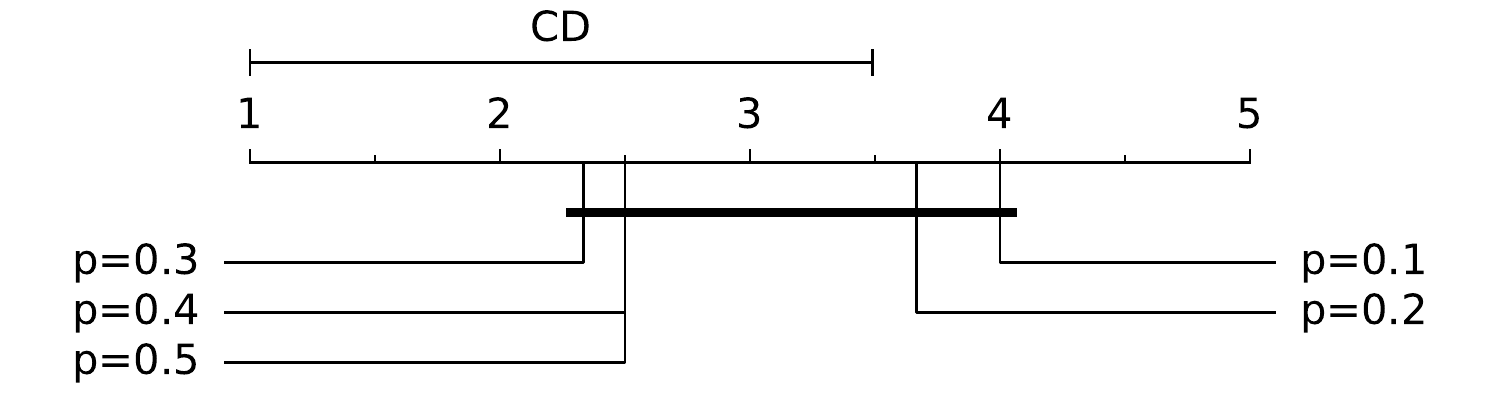} \\
		\textbf{(a) \horse steps along the descent direction}\\
		\includegraphics[width=0.35\textwidth]{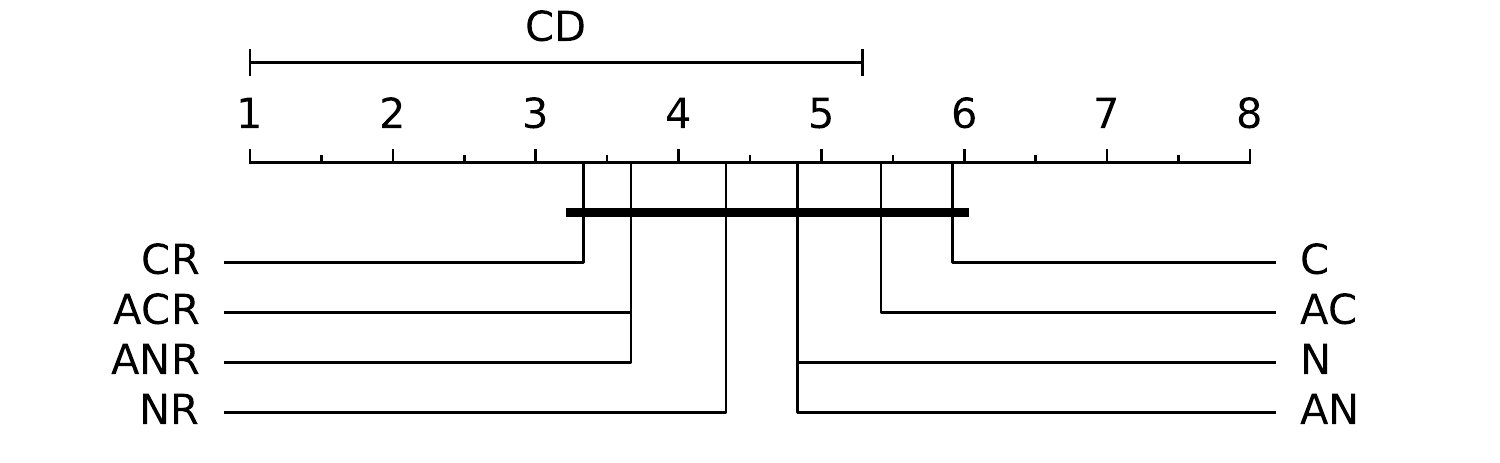} \\
		\textbf{(b) \horse variants} \\
		
		\includegraphics[width=0.35\textwidth]{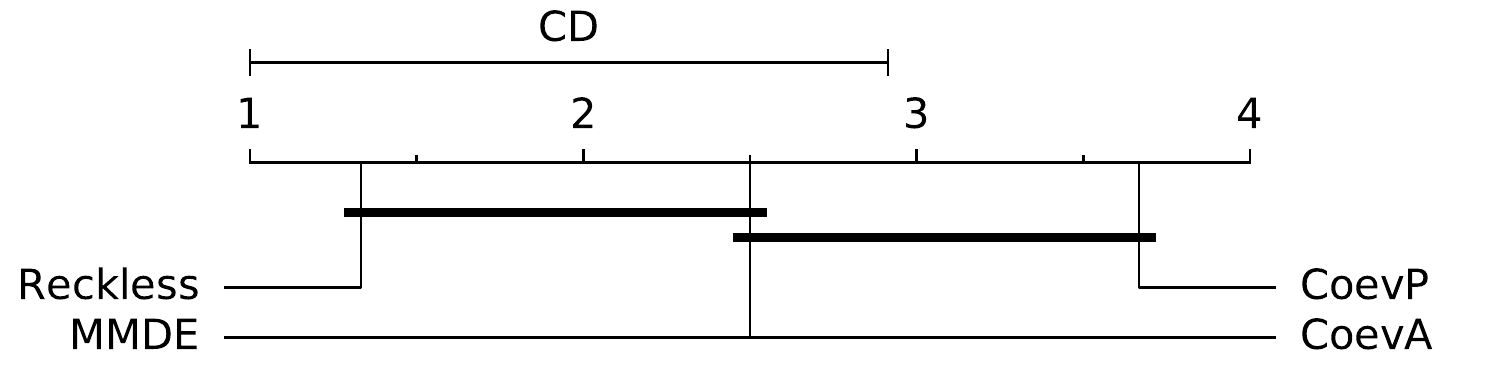} \\
		\textbf{(c) Convergence} \\ \includegraphics[width=0.35\textwidth]{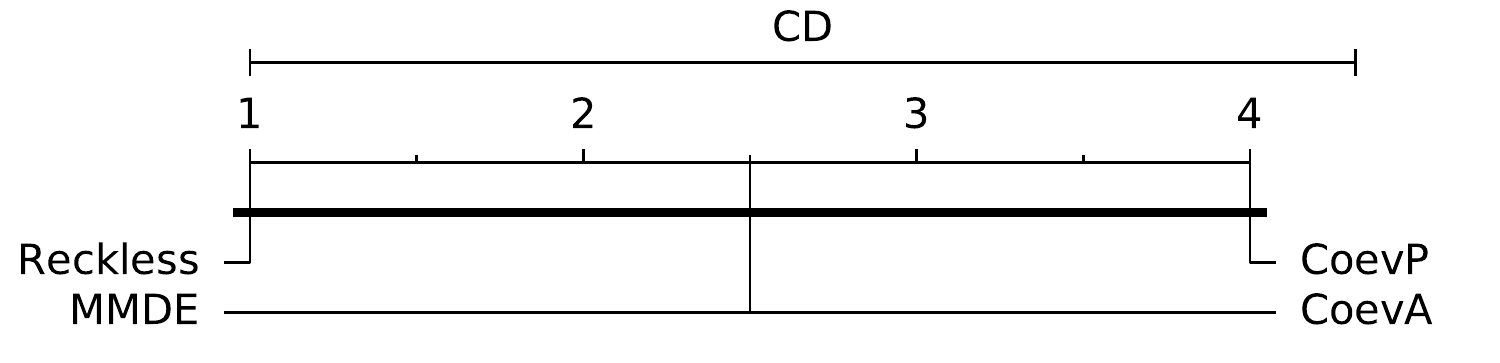} \\
		\textbf{(d) Scalability}\\
		
	\end{tabular}
	\caption{Critical Difference Plots. In our setup, a statistical significant difference was only found in (c) where \horse outperforms \coevp. The Friedman test is used when comparing the performance of two or more algorithms as reported by differing evaluators. In our setup, the evaluators are the six benchmark problems---except for (d), where the evaluators are just $\objfct_1$ and $\objfct_2$. We run a Friedman test for each of our four experiments, with the null-hypothesis of same regret. In each of the four tests, the null-hypothesis was rejected with a \textit{p-value} $< 10^{-3}$, implying that at least one of the algorithms produced different regret values. In order to confirm which algorithm performed better, we carried out a post-hoc Nemenyi test. This test produces a critical difference (CD) score when provided with average ranks of the algorithms. The difference in average ranks between any two algorithms should be greater than this CD for one to conclude that one algorithm does better than the other.}
	\label{fig:cd}
\end{figure}

\end{document}